\definecolor{pdflinkcolor}{rgb}{.1,.6,.1}	
\definecolor{pdfcitecolor}{rgb}{.6,.1,.1}	
\definecolor{pdfanchorcolor}{rgb}{0,1,0}	
\definecolor{pdfurlcolor}{rgb}{.1,.6,.1}	
\definecolor{pdfpagecolor}{rgb}{0,0,1}		
\definecolor{pdffilecolor}{rgb}{1,0,0}		
\definecolor{mygrey}{gray}{0.8}
\newcommand{\mc}[1]{\mathcal{#1}}
\newtheorem*{theorem*}{Theorem}
\newtheorem{lem}{Lemma}
\newtheorem{thm}[lem]{Theorem}
\newtheorem{cor}[lem]{Corollary}
\newtheorem{rem}[lem]{Remark}
\renewcommand{\Re}{\operatorname{Re}}
\renewcommand{\Im}{\operatorname{Im}}
\newcommand{\rd}{\mathrm d}
\newcommand{\e}{\mathrm e}
\newcommand{\id}{I_{2\times 2}}
\newcommand{\I}{\mathrm i}
\newcommand{\C}{\mathbb C}
\newcommand{\N}{\mathbb N}
\newcommand{\R}{\mathbb R}
\newcommand{\Z}{\mathbb Z}
\newcommand{\Dop}{\mathcal D}
\newcommand{\Sopk}{\mathcal S_\kappa}
\newcommand{\Bk}{B_{\kappa}}
\newcommand{\mAk}{\mathcal A_{\kappa}}
\newcommand{\mfAk}{\mathfrak A_{\kappa}}
\newcommand{\mfRk}{\mathfrak R_{\kappa}}
\newcommand{\mfA}{\mathcal A}
\newcommand{\mfR}{\mathcal R}
\newcommand{\pot}{V}
\newcommand{\triple}[1]{%
{\left\vert\kern-0.25ex\left\vert\kern-0.25ex\left\vert #1 \right\vert\kern-0.25ex\right\vert\kern-0.25ex\right\vert}}
\newcommand{\Ltwo}{L_2(0,\pi)}
\newcommand{\Ltwotwo}{\Ltwo^2}
\DeclareMathOperator{\integral}{J}
\DeclareMathOperator{\dom}{dom}
\DeclareMathOperator{\spec}{spec}
\DeclareMathOperator{\sign}{sign}
\date{3rd August 2015}
\title[Sharp eigenvalue enclosures for the K-N operator]{Sharp eigenvalue enclosures for the perturbed angular Kerr-Newman Dirac operator}
\author{Lyonell Boulton}
\author{Monika Winklmeier}
\subjclass[2010]{65L15, 65L20, 65L60, 35P15, 83C57}
\keywords{Numerical approximation of eigenvalues, projection methods, computation of upper and lower bounds for eigenvalues, angular Kerr-Newman Dirac operator}
\begin{document}

\maketitle
\begin{abstract}
   We examine a certified strategy for determining sharp intervals of enclosure for the eigenvalues of matrix differential operators with singular coefficients.
   The strategy relies on computing the second order spectrum relative to subspaces of continuous piecewise linear functions.
   For smooth perturbations of the angular Kerr-Newman Dirac operator, explicit rates of convergence linked to regularity of the eigenfunctions are established.
   Numerical tests which validate and sharpen by several orders of magnitude the existing benchmarks are also included.
\end{abstract}


\section{Introduction}
The Kerr-Newman spacetime describes a stationary electrically charged rotating black hole.
In this regime the Dirac equation for an electron takes the form
\begin{align*}
   (\widehat\mfA + \widehat\mfR)\widehat\Phi = 0
\end{align*}
where $\widehat\Phi$ is a four components spinor which describes the wave function of the electron. The operators $\widehat\mfA$ and $\widehat\mfR$ are complicated $4\times 4$ differential expressions in $(r_+,\infty)\times(0,\pi)\times(-\pi,\pi)\times\R$, \cite{chandrasekhar}.
From the ansatz \[\widehat \Phi(r,\theta,\phi,t) = \e^{-\I \omega t} \e^{-\I \kappa\phi}\Phi(r,\theta)\] with a suitable four components $\Phi$, two eigenvalue equations are obtained:
\begin{align*}
   (\mathcal \mfRk -\omega)\rho = 0 \qquad \text{and} 
   \qquad
   (\mathcal \mfAk -\lambda)\psi = 0.
\end{align*}
The radial part $\mfRk$ comprises only derivatives with respect to the radial coordinate and the angular part $\mfAk$ has only derivatives with respect to the angular coordinate $\theta$. Note that $\theta=0$ and $\theta=\pi$ indicate the direction parallel to the axis of rotation of the black hole.
The eigenvalue $\omega$ in the radial equation corresponds to the energy of the electron.
These two equations are not completely separated as they are still coupled by the angular momentum of the rotating black hole which is given by a real parameter $a$.

The Cauchy problem associated to the full Kerr-Newman Dirac operator has been considered in
\cite{FKSY2000,FKSYErr2000,BaticSchmid2006,WY09}, while the radial part of the 
system has been thoroughly examined in \cite{Schmid,WinklmeierPhD}. In the present paper, 
we focus on the eigenvalue problem associated to the angular part which in suitable coordinates can be written as
\begin{align} \label{frambuesa}
   \mfAk = 
     \begin{pmatrix}
      -am\cos\theta & \frac{\rd}{\rd \theta} + \frac{ \kappa}{ \sin\theta } + a\omega\sin\theta \\
      -\frac{\rd}{\rd \theta} + \frac{ \kappa}{ \sin\theta } + a\omega \sin\theta & am\cos\theta
   \end{pmatrix}, \qquad 0<\theta<\pi.
\end{align}
The only datum inherent to the black hole in this expression is the coupling parameter $a$.
The other physical quantities are the mass of the electron $m$, its energy $\omega$ and $\kappa\in\Z+\frac{1}{2}$.

The operator which will be associated to \eqref{frambuesa} below is self-adjoint, it has a compact resolvent and it is strongly indefinite 
in the sense that the spectrum accumulates at $\pm\infty$.
Various attempts at computing its spectrum have been considered in the past.
Notably, a series expansion for $\lambda$ in terms of $a(m+\omega)$ and $a(m-\omega)$ 
was derived in \cite{SFC} by means of techniques involving continued fractions, see also \cite{BSW2005}. 
A further asymptotic expansion in terms of $a\omega$ and $m/\omega$ was
 reported in \cite{chakrabarti}.
In both cases however, no precise indication of the orders of magnitude of a reminder term was given.

A simple explicit expression for the eigenvalues appears to be available only for the case $am=\pm a\omega$.
By invoking an abstract variational principle on the corresponding operator pencil, coarse analytic enclosures for these eigenvalues in the case $am\neq\pm a\omega$ were found in \cite{WinklmeierPhD,Win08}.
Our aim is to sharpen these enclosures by several orders of magnitude via a projection method. 

Techniques for determining enclosures for eigenvalues of indefinite operator matrices via variational formulations have been examined by many authors in the past, see for example 
\cite{Siedentop,EstebanSere,LLT2002,KLT2004,LangerMTretter2006,Tretterbook,MR2995208}.
These are strongly linked with the classical complementary bounds for eigenvalues by Temple and Lehmann \cite[Theorem~4.6.3]{1995Davies}, which played a prominent role in the early days of quantum mechanics.
See \cite{1995Zimmermann,2004Davies}.
The so-called quadratic method, developed by Davies \cite{1998Davies}, Shargorodsky \cite{2000Shargorodsky} and others \cite{2004Levitin,MR2219033}, is an alternative to these approaches.
As we shall demonstrate below, an application of this method leads to sharp eigenvalue bounds for the operator associated to $\mfAk$.
Recently, the quadratic method was applied successfully to crystalline Schr{\"o}dinger operators \cite{2006Boulton}, the hydrogenic Dirac operator \cite{Boulton:2009p2970} and models from magnetohydrodynamics \cite{2010Strauss}.

The concrete purpose of this paper is to address the numerical calculation of intervals of enclosure for the eigenvalues of $\mfAk$ with the possible addition of a smooth perturbation.
We formulate an approach which is certified up to machine precision and is fairly general in character.
We also find explicit rates for its convergence in terms of the regularity of the eigenfunctions. In the case of the unperturbed $\mfAk$, we perform various numerical tests which validate and sharpen existing benchmarks by several orders of magnitude. 

In the next section we present the operator theoretical setting of the eigenvalue problem.
Lemma~\ref{lem:decay-eigenfunctions} and Corollary~\ref{cor:H0} are devoted to explicit  smoothness properties and boundary behaviour of the eigenfunctions.
We include a complete proof of the first statement in the appendix~\ref{app:frobenius}.

In Section~\ref{section:manzana} we formulate the quadratic method on trial subspaces of piecewise linear functions. Theorem~\ref{mamoncillo} establishes concrete rates of convergence for the numerical approximation of eigenvalues.
A proof of this crucial statement is deferred to Section~\ref{section:ceresa}.
The main ingredients of this proof are the explicit error estimates for the approximation of eigenfunctions by continuous piecewise linear functions in the graph norm which are presented in Theorem~\ref{thm:main}.

Various numerical tests can be found in Section~\ref{platano}.
We begin that section by describing details of the calculations reported previously in \cite{chakrabarti,SFC}.
These tests address the following.
\begin{enumerate}
    \item Validity of the numerical values from \cite{chakrabarti,SFC}.
    \item Sharpening of the eigenvalue bounds in the context of the quadratic method.
    \item Optimal order of convergence.
\end{enumerate}
These tests were performed by implementing in a suitable manner the  computer code written in Comsol LiveLink which is included in the Appendix~\ref{mandarina}.

\subsection*{Notational conventions and basic definitions}

Below we employ calligraphic letters to refer to operator matrices.
We denote by $\dom(A)$ the domain of the linear operator $A$.
The Hilbert space $\Ltwotwo$ is that consisting of two-component vector-valued functions 
$u:(0,\pi)\longrightarrow \mathbb{C}^2$ such that 
\[
   \|u\| =\left( \int_0^\pi | u(\theta) |^2\rd \theta \right)^{\frac{1}{2}}
= \left( \int_0^\pi | u_1(\theta) |^2+|u_2(\theta)|^2\,\rd \theta \right)^{\frac{1}{2}}<\infty.
\]
Let $u\in \Ltwotwo$ and denote its Fourier coefficients in the sine basis by
\begin{align*}
   \widehat u_n = \sqrt{\frac{2}{\pi}} \int_0^\pi u(\theta) \sin( n \theta)\,\rd \theta\in \mathbb{C}^2,
   \qquad n\in\N.
\end{align*}
Let $\langle n \rangle = (1+n^2)^{\frac{1}{2}}$.
Let $r>0$.
The fractional  Sobolev spaces $H^r(0,\pi)$ will be, by definition, the Hilbert space
\begin{equation*}
   H^r(0,\pi)
   =
   \left\{ u\in \Ltwotwo : \sum_{n\in\N} \langle n\rangle^{2r} |\widehat u_n|^2 < \infty \right\}.
\end{equation*}
Here the norm is given by the expression
\[
    \|u\|_r=\left(\sum_{n\in\N} \langle n\rangle^{2r} |\widehat u_n|^2\right)^{\frac12}.
\]
Note that an analogous definition can be made, if we instead consider the Fourier coefficients of $u$ in the cosine basis.

If $r\in \N$, we recover the classical Sobolev spaces, where the norm has also the representation
\begin{align*}
   \|u\|_r = \left( \sum_{j=0}^r \|u^{(j)}\|^2 \right)^{\frac{1}{2}}.
\end{align*}
We set $H^1_0(0,\pi)$ to be the completion of $[C^\infty_0(0,\pi)]^2$ in the norm of $H^1(0,\pi)$.


\section{A concrete self-adjoint realisation and regularity of the eigenfunctions}

Here and everywhere below $\kappa$ will be a real parameter satisfying $|\kappa|\geq \frac12$ and $\pot= [v_{ij}]_{i,j=1}^2$ will be a hermitian matrix potential with all its entries being complex analytic functions in a  neighbourhood of $[0,\pi]$.
The operator theoretical framework of the spectral problem associated to matrices of the form
\begin{align}
   \label{grosella}
   \mfAk =
   \begin{pmatrix}
      0 & \frac{\rd}{\rd \theta} + \frac{ \pi\kappa}{ \theta(\pi-\theta) } \\
      -\frac{\rd}{\rd \theta} + \frac{ \pi\kappa }{\theta(\pi-\theta) } & 0
   \end{pmatrix}
    + \pot 
\end{align}
can be set by means of well establish techniques, \cite{Weidmann}.
Our first goal is to identify a concrete self-adjoint realisation of the differential expression \eqref{grosella} in $\Ltwotwo$.

\begin{rem}
The spectral problem associated to the angular Kerr-Newman Dirac operator \eqref{frambuesa} fits into the present framework by taking  
\begin{equation} \label{coco}
      V(\theta)=\kappa \left( \frac{1}{\sin(\theta)}-\frac{\pi}{\theta(\pi-\theta)}  \right)\begin{pmatrix}
              0 & 1 \\ 1 & 0 
          \end{pmatrix} + \begin{pmatrix}
          -am \cos(\theta)    & a\omega \sin(\theta) \\ a\omega \sin(\theta) & am \cos(\theta) 
         \end{pmatrix}.
\end{equation}
Note that $V$ has an analytic continuation to a heighbourhood of\, $[0, \pi]$.
\end{rem}

Let $\pot=0$. In this case the fundamental solutions of $\mfAk \Psi =0$ can be found explicitly.
The differential expression $\mfAk$ is in the limit point case for $|\kappa|\geq \frac12$ and in the limit circle case for $|\kappa|< \frac12$. 
Thus, for $|\kappa|\ge \frac{1}{2}$, the maximal operator
\begin{equation} \label{agraz}
   \mAk= \mfAk|_{\dom(\mAk)},\quad
   \dom(\mAk) =
   \left\{ \Psi\in \Ltwotwo \ :\ \mfAk\Psi\in \Ltwotwo \right \}
\end{equation}
is self-adjoint in $\Ltwotwo$.

By virtue of the particular block operator structure of the matrix in \eqref{grosella}, 
$
 \dom(\mAk)=\mathcal{D}_1 \oplus\mathcal{D}_2,
$
where
\begin{align*}
   \mathcal{D}_1   &=
   \left \{ f \in \Ltwo\ :\
   \int_0^\pi \left|\left( -\frac{\rd}{\rd \theta} + \frac{ \pi \kappa }{ \theta(\pi-\theta) } \right)f(\theta)\right|^2 \rd \theta  < \infty \right\} \quad \text{and} \\
   \mathcal{D}_2  
   &= \left\{ 
   f \in \Ltwo\ :\
\int_0^\pi \left|\left( \frac{\rd}{\rd \theta} + \frac{ \pi \kappa }{ \theta(\pi-\theta) } \right)f(\theta)\right|^2 \rd \theta  < \infty
\right\}. 
\end{align*}
Thus, the operators
\begin{alignat}{2}
   \label{mangostino}
   \Bk & = \frac{\rd}{\rd \theta} + \frac{\pi \kappa }{ \theta(\pi-\theta) },
   &\qquad \dom(\Bk)=\mathcal{D}_2, \\
   \label{mangostinodagger}
   \Bk^\dagger & = -\frac{\rd}{\rd \theta} + \frac{ \pi \kappa }{ \theta(\pi-\theta) },
   & \dom(\Bk^\dagger )=\mathcal{D}_1 ,
\end{alignat}
are adjoint to one another and
\[
    \mAk=\begin{pmatrix} 0 & \Bk \\ \Bk^\ast & 0 \end{pmatrix}.
\]
Both $\Bk$ and $\Bk^*$ have empty spectrum. The resolvent kernel of these expressions is square integrable, so they have compact resolvent.
Therefore also $\mAk$ has a compact resolvent.

Now consider $\pot \not=0$. We define the corresponding operator associated with
\eqref{grosella} also by means of \eqref{agraz}.
As $V$ is bounded, it yields a bounded self-adjoint matrix multiplication operator in $\Ltwotwo$.
Routine perturbation arguments show that also in this case $\mAk$ is a self-adjoint operator with compact resolvent.
Note that $\dom(\mAk)$ is independent of $V$.

\begin{rem}
   \label{naranja}
 The spectrum of $\mAk$ consists of two sequences of eigenvalues. One non-negative, accumulating at $+\infty$, and the other one negative accumulating at $-\infty$. An explicit analysis involving the Frobenius method (see Remark~\ref{grapefruit}) shows that no eigenvalue of $\mAk$ has multiplicity greater than one.
\end{rem}

As we shall see next, any eigenfunction of $\mAk$ is regular in the interior of $[0,\pi]$.
Moreover, it has a boundary behaviour explicitly controlled  by $|\kappa|$.
Identity \eqref{uva} below will play a crucial role later on. 

\begin{lem}
   \label{lem:decay-eigenfunctions}
   Let $|\kappa|\ge \frac{1}{2}$. Let $u\not=0$ be an eigenfunction of $\mAk$.
   Then there exists a unique vector-valued function $q$ which is complex analytic in a neighbourhood of $[0,\pi]$ such that
   \begin{align} \label{uva}
      u(\theta) = 
      \theta^{|\kappa|}(\pi-\theta)^{|\kappa|} q(\theta).
   \end{align}
\end{lem}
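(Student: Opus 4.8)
The plan is to read the eigenvalue equation $\mAk u=\lambda u$ as a first order linear system with a regular singular point at each of the endpoints $0$ and $\pi$, and to extract \eqref{uva} from a Frobenius expansion at those points. Writing the components of $u$ as $u_1,u_2$ and using \eqref{mangostino}--\eqref{mangostinodagger}, the equation is equivalent to $u'=M(\theta)u$ with $M$ having a simple pole at each endpoint; multiplying by $\theta$ yields
\[
   \theta u'(\theta)=N(\theta)u(\theta),\qquad
   N(\theta)=\frac{\pi\kappa}{\pi-\theta}\begin{pmatrix}1&0\\0&-1\end{pmatrix}
   +\theta\begin{pmatrix}v_{21}&v_{22}-\lambda\\ \lambda-v_{11}&-v_{12}\end{pmatrix},
\]
where $N$ is complex analytic near $\theta=0$, the coefficient $\pi\kappa/(\pi-\theta)$ being regular there and $V$ analytic by hypothesis, with $N(0)=\operatorname{diag}(\kappa,-\kappa)$. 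An analogous normal form holds near $\theta=\pi$ after $\theta\mapsto\pi-\theta$, so I only discuss the endpoint $0$.

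First I would construct the solution attached to the larger exponent $|\kappa|$. Substituting $u(\theta)=\theta^{|\kappa|}g(\theta)$ turns the system into $\theta g'=(N(\theta)-|\kappa|\,I)g$, whose indicial matrix $N(0)-|\kappa|\,I$ has eigenvalues $0$ and $-2|\kappa|$. Since $|\kappa|\ge\frac12$, the value $-2|\kappa|$ is negative and in particular not a positive integer, so $nI-(N(0)-|\kappa|\,I)$ is invertible for every $n\ge 1$ and the recursion $\bigl(nI-(N(0)-|\kappa|\,I)\bigr)g_n=\sum_{j=1}^{n}N_j g_{n-j}$ (with $N_j$ the Taylor coefficients of $N$) determines the $g_n$ uniquely once $g_0$ is picked in the one dimensional kernel of $N(0)-|\kappa|\,I$; the resulting series converges on a disc of positive radius by the classical majorant estimate. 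This produces a nonzero solution $\theta^{|\kappa|}g_0(\theta)$ with $g_0$ analytic near $0$, $g_0(0)\ne0$, and square integrable near $0$ because $\int_0\theta^{2|\kappa|}\,\rd\theta<\infty$. The delicate point is precisely that \emph{no} logarithmic term enters; this is guaranteed by the invertibility above, the only possible obstruction in the Frobenius recursion for the larger exponent being a positive integer eigenvalue of the shifted indicial matrix, which $|\kappa|\ge\frac12$ excludes.

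To finish, I would identify $u$ with this solution near each endpoint and assemble $q$. Since $\mfAk$ is in the limit point case at both $0$ and $\pi$ when $|\kappa|\ge\frac12$ --- a classification unaffected by the bounded perturbation $V$ --- the solutions of $(\mfAk-\lambda)u=0$ that are square integrable near $0$ form a one dimensional space, hence equal $\C\,\theta^{|\kappa|}g_0(\theta)$; alternatively this follows from a Wronskian argument showing any independent second solution grows like $\theta^{-|\kappa|}$. As the eigenfunction $u$ is square integrable near $0$ we get $u(\theta)=c_0\theta^{|\kappa|}g_0(\theta)$ near $0$ with $c_0\ne0$, and symmetrically $u(\theta)=c_\pi(\pi-\theta)^{|\kappa|}g_\pi(\theta)$ near $\pi$; on $(0,\pi)$ the coefficients of the system are analytic, so $u$ is analytic there. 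Setting $q(\theta)=\theta^{-|\kappa|}(\pi-\theta)^{-|\kappa|}u(\theta)$ on $(0,\pi)$ gives an analytic function which near $0$ equals $c_0(\pi-\theta)^{-|\kappa|}g_0(\theta)$, hence extends analytically across $0$ (because $(\pi-\theta)^{-|\kappa|}$ is analytic and non-vanishing there), and likewise across $\pi$; by the identity theorem these local extensions agree with $q$, so $q$ is analytic in a neighbourhood of $[0,\pi]$ and satisfies \eqref{uva}. Uniqueness is immediate, since any two such $q$ agree after dividing \eqref{uva} by $\theta^{|\kappa|}(\pi-\theta)^{|\kappa|}$ on $(0,\pi)$ and hence everywhere. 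I expect the Frobenius analysis at the singular endpoints --- especially excluding logarithmic terms and checking convergence --- to be the main obstacle.
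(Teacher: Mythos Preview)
Your argument is correct and in fact cleaner than the route taken in the paper. Both proofs begin by rewriting $(\mfAk-\lambda)u=0$ as a first order system with a regular singular point at each endpoint, but they diverge in how the Frobenius analysis is organised.

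The paper invokes the Coddington--Levinson result (Theorem~\ref{acai}) to produce a \emph{full} fundamental system $U_0$ near $0$ (and $U_\pi$ near $\pi$). That theorem requires the two indicial exponents $\pm\kappa$ not to differ by a positive integer, so the paper must split into the generic case $2\kappa\notin\N$ and the resonant case $2\kappa\in\N$. In the latter it performs a chain of $2\kappa-1$ shearing transformations with the matrices $S$ and $T_j$ to push the indicial eigenvalues together, after which Theorem~\ref{acai} applies and one reads off that only one column of the fundamental matrix is square integrable. Your approach sidesteps the case distinction entirely: by substituting $u=\theta^{|\kappa|}g$ you shift the indicial eigenvalues to $0$ and $-2|\kappa|$, and you observe that the recursion for the \emph{larger} exponent is always unobstructed because $nI-(N(0)-|\kappa|I)=\operatorname{diag}(n,\,n+2|\kappa|)$ is invertible for every $n\ge1$. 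The possibility of logarithms concerns only the solution attached to the smaller exponent, which you never need. You then use the limit point classification (stable under the bounded perturbation $V$) to pin down the eigenfunction as a multiple of this single constructed solution, and assemble $q$ by division.

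What each approach buys: yours is shorter and uniform in $\kappa$, avoiding the reduction machinery. The paper's construction of the complete fundamental system, on the other hand, exhibits explicitly that any independent second solution blows up like $\theta^{-|\kappa|}$ (your Wronskian remark), which is what underlies Remark~\ref{grapefruit} on simplicity of the eigenvalues.
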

\begin{proof}
Included in Appendix~\ref{app:frobenius}.
\end{proof}

This implies that every eigenfunction $u$ of $\mAk$ belongs to $\dom(\Dop)$ and $\dom(\Sopk)$ and therefore
\begin{align}
   \label{eq:sumrepresentation}
   \mAk u = \Dop u + \Sopk u + \pot u
\end{align}
where
\begin{align}
   \label{uchuva}
   \Dop =
   \begin{pmatrix}
      0 & \frac{\rd}{\rd\theta} \\
      -\frac{\rd}{\rd\theta} & 0
   \end{pmatrix},
   \qquad
   \Sopk =
   \begin{pmatrix}
      0 & \frac{\pi \kappa}{\theta(\pi-\theta)} \\
      \frac{\pi \kappa}{\theta(\pi-\theta)} & 0
   \end{pmatrix}
\end{align}

\begin{cor} \mbox{}
   \label{cor:H0} 
   Let $|\kappa|>\frac12$. Let $u$ be an eigenfunction of $\mAk$. The following properties hold true.
   \begin{enumerate} 
      \item \label{cor:H0a} $u\in H_0^1(0,\pi)$.      
      \item \label{cor:H0c}
      $u$ has a bounded $r$th derivative for every $r\in \mathbb{N}$ satisfying $1\leq r \leq |\kappa|$.
      \item \label{cor:H0b}
      $u\in H^r(0,\pi)$ for every $r< |\kappa|+\frac{1}{2}$.
   \end{enumerate}
\end{cor}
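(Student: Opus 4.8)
The plan is to read off all three assertions directly from the factorisation \eqref{uva}, treating the two boundary points symmetrically. Since $q$ is complex analytic in a neighbourhood of $[0,\pi]$, it is bounded together with all its derivatives on $[0,\pi]$, so the only issue is the behaviour of the weight $w(\theta)=\theta^{|\kappa|}(\pi-\theta)^{|\kappa|}$ and its derivatives near $\theta=0$ and $\theta=\pi$. Write $u=wq$ and expand derivatives by the Leibniz rule; the worst term in $u^{(j)}$ is $w^{(j)}q$, and near $\theta=0$ one has $w^{(j)}(\theta)=O(\theta^{|\kappa|-j})$ (and symmetrically $O((\pi-\theta)^{|\kappa|-j})$ near $\pi$), while every other term decays at least as fast.

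For part \eqref{cor:H0a}: from \eqref{uva} with $|\kappa|>\tfrac12$ we get $u\in C[0,\pi]$ with $u(0)=u(\pi)=0$, and $u'=w'q+wq'$ with $w'(\theta)=O(\theta^{|\kappa|-1})\in L_2$ near $0$ (since $2(|\kappa|-1)>-1$) and likewise near $\pi$, so $u\in H^1(0,\pi)$. Combined with the vanishing boundary values this gives $u\in H^1_0(0,\pi)$; more carefully, one approximates $u$ in the $H^1$ norm by the cut-off functions $\chi_\varepsilon u$ where $\chi_\varepsilon$ is a smooth cut-off vanishing near the endpoints, using that $\|\chi_\varepsilon' u\|\to 0$ because $u(\theta)=O(\theta^{|\kappa|})$ beats the $O(\theta^{-1})$ growth of $\chi_\varepsilon'$ when $|\kappa|>\tfrac12$; then mollify in the interior. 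For part \eqref{cor:H0c}: if $1\le r\le|\kappa|$ then $|\kappa|-j\ge 0$ for all $j\le r$, so every term in the Leibniz expansion of $u^{(j)}$, $j\le r$, is a product of a bounded power of $\theta$ (resp. $\pi-\theta$) and a bounded analytic factor, hence $u\in C^r[0,\pi]$ and in particular $u^{(r)}$ is bounded. For part \eqref{cor:H0b}: fix $r<|\kappa|+\tfrac12$; it suffices to bound $\|u\|_s$ for integer $s=\lceil r\rceil$ when $r$ is close to $|\kappa|+\tfrac12$, but it is cleaner to argue by interpolation — pick an integer $m$ with $m-1<|\kappa|\le m$, note from the Leibniz analysis that $u^{(m)}(\theta)=O(\theta^{|\kappa|-m})+\ldots$ with $|\kappa|-m>-\tfrac12$, so $u^{(m)}\in L_2$ and hence $u\in H^m$; then the endpoint space $H^{|\kappa|+1/2}$ is reached by real interpolation of the weighted space $\theta^{|\kappa|}(\pi-\theta)^{|\kappa|}L_2$ with $H^m$, and every $H^r$ with $r<|\kappa|+\tfrac12$ sits strictly inside. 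Alternatively, and perhaps more transparently in the Fourier-coefficient definition of $H^r$ used in this paper, one computes the sine-Fourier coefficients $\widehat u_n$ of $u=wq$ by repeated integration by parts: each integration by parts gains a factor $n^{-1}$ as long as the boundary terms vanish, which they do up to order $m$, and the remaining integral $\int_0^\pi u^{(m)}(\theta)\sin(n\theta)\,\rd\theta$ is estimated using $u^{(m)}(\theta)=O(\theta^{|\kappa|-m})$ near $0$ and the analogous bound near $\pi$, yielding $|\widehat u_n|=O(n^{-m}\cdot n^{-(|\kappa|-m)-1/2+\delta})=O(n^{-|\kappa|-1/2+\delta})$ for every $\delta>0$, whence $\sum_n\langle n\rangle^{2r}|\widehat u_n|^2<\infty$ for all $r<|\kappa|+\tfrac12$.

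The main obstacle is part \eqref{cor:H0b}, and specifically making the Fourier-tail estimate $|\widehat u_n|=O(n^{-|\kappa|-1/2+\delta})$ rigorous: the integration-by-parts scheme produces, after stripping off $m$ factors of $n^{-1}$, an oscillatory integral of a function with an integrable but non-smooth singularity $\theta^{|\kappa|-m}$ at the endpoint, and one must quantify the decay contributed by that oscillation. I would handle this by the standard device of splitting $\int_0^\pi = \int_0^{1/n}+\int_{1/n}^{\pi/2}+\ldots$ (symmetrically at $\pi$): on $(0,1/n)$ bound $\sin(n\theta)$ crudely and integrate $\theta^{|\kappa|-m}$, getting $O(n^{-(|\kappa|-m)-1})$; on the remaining interval integrate by parts once more and use $\int_{1/n}^{\pi/2}\theta^{|\kappa|-m-1}\,\rd\theta=O(n^{-(|\kappa|-m)})$ when $|\kappa|-m<0$. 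Putting the pieces together gives exactly the claimed rate, and summability of $\langle n\rangle^{2r}|\widehat u_n|^2$ follows since $-2(|\kappa|+1/2-\delta)+2r<-1$ precisely when $r<|\kappa|+1/2-\delta$, and $\delta$ is arbitrary.
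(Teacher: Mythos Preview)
Your approach is essentially the paper's: parts \ref{cor:H0a} and \ref{cor:H0c} are read off directly from the factorisation \eqref{uva} (the paper says only ``follow directly from Lemma~\ref{lem:decay-eigenfunctions}''), and for part \ref{cor:H0b} both arguments integrate the Fourier coefficients by parts as many times as the vanishing boundary values allow and then squeeze the last fractional bit of decay out of the remaining oscillatory integral. The only technical difference is that the paper does $\ell+2$ integrations by parts (with a parity--dependent choice of sine or cosine basis so that the $(\ell+1)$st boundary term still vanishes) and then uses the pointwise bound $\max_\theta|\theta^{-1+\epsilon'}\sin(n\theta)|\le n^{1-\epsilon'}$, whereas you do $m=\ell+1$ integrations by parts in the sine basis and handle the leftover integral by splitting at $1/n$; these are interchangeable devices.

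There is, however, an arithmetic slip you should fix. Your split-at-$1/n$ computation in the last paragraph actually yields $|\widehat u_n|=O(n^{-m}\cdot n^{m-|\kappa|-1})=O(n^{-|\kappa|-1})$, not the weaker $O(n^{-|\kappa|-1/2+\delta})$ you announced a few lines earlier. This matters: from $|\widehat u_n|=O(n^{-|\kappa|-1/2+\delta})$ the sum $\sum\langle n\rangle^{2r}|\widehat u_n|^2$ converges only for $r<|\kappa|-\delta$, not for $r<|\kappa|+\tfrac12-\delta$ as you wrote (check: $-2(|\kappa|+\tfrac12-\delta)+2r<-1$ gives $r<|\kappa|-\delta$). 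With the correct rate $O(n^{-|\kappa|-1})$ the summability condition becomes $2r-2|\kappa|-2<-1$, i.e.\ $r<|\kappa|+\tfrac12$, which is exactly the claim. So the argument is right, but the intermediate exponent and the final inequality need correcting.
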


\begin{proof} \

   \emph{Statements \ref{cor:H0a} and \ref{cor:H0c}.}
   They follow directly from Lemma~\ref{lem:decay-eigenfunctions}.

   \emph{Statement \ref{cor:H0b}.}
   Let $\ell\in\mathbb{N}\cup\{0\}$ and $\epsilon\in(0,1]$ be such that $|\kappa|=\ell+\epsilon$.
   Let
   \[
   \tau(x)= \begin{cases}
       \sin(x) , & \text{if }\ \ell \equiv_{2} 0, \\
      \cos(x) , & \text{if }\ \ell \equiv_{2} 1.
   \end{cases}
   \]
Let
\begin{align}
   \label{pomelo}
   \widehat{u}_n=\sqrt{\frac{2}{\pi}} \int_0^\pi u(\theta) \tau( n \theta)\,\rd \theta,
\end{align}
be the Fourier coefficients of $u$ in $\tau(n\theta)$. Then $u\in H^r(0,\pi)$ if and only if 
\begin{equation}
\label{seaweed}
\sum_{n=1}^\infty \langle n \rangle^{2r}|\hat{u}_n|^2<\infty.
\end{equation}
Note that $n=0$ can be omitted here, because $u$ is always a bounded function. Let $r<|\kappa|+\frac{1}{2} = \ell +\epsilon+ \frac{1}{2}$ and choose $0<\epsilon'<\epsilon$ such that
$r < \ell +\epsilon'+ \frac{1}{2}$.
We show that \eqref{seaweed} holds true.

Integrating by parts $\ell + 2$ times in \eqref{pomelo} gives
\begin{align*}
   |\widehat{u}_n|
   &=\frac{1}{n^{\ell +2}}\sqrt{\frac{2}{\pi}}\left|\left[u^{(\ell+1)}(\theta) \sin( n \theta)\right]_{\theta=0}^{\theta=\pi} - \int_0^\pi u^{(\ell+2)}(\theta) \sin( n \theta)\,\rd \theta \right|.
\end{align*}
By virtue of \eqref{uva},
\[
u^{(\ell+1)}(\theta)=\theta^{-1 + \epsilon}(\pi-\theta)^{-1 + \epsilon} q_{\ell+1}(\theta)
\]
and
\[
u^{(\ell+2)}(\theta)=\theta^{-2 + \epsilon}(\pi-\theta)^{-2 + \epsilon} q_{\ell+2}(\theta)
\]
where the $q_{j}$ are analytic functions in a neighbourhood of $[0,\pi]$. Then, taking the limit as $\theta\to 0$ and $\theta\to \pi$, gives
\[
    \left[u^{(\ell+1)}(\theta) \sin( n \theta)\right]_{\theta=0}^{\theta=\pi}=0.
\]
Hence,
\begin{align*}
   |\widehat{u}_n|
   &= \frac{1}{n^{\ell +2}}\sqrt{\frac{2}{\pi}} \left|\int_0^\pi \theta^{-2 + \epsilon}(\pi-\theta)^{-2 + \epsilon} q_{\ell+2}(\theta) \sin( n \theta)\,\rd \theta \right|
   \\
   &\leq \frac{c_1}{n^{\ell +2}}
   \left(\max_{0\leq \theta \leq \pi}|\theta^{-1+\epsilon'}(\pi-\theta)^{-1+\epsilon'}\sin(n\theta)|
   \right)
   \int_0^\pi \theta^{-1 + \epsilon-\epsilon'}(\pi-\theta)^{-1 + \epsilon-\epsilon'}\,\rd \theta \\
   &= \frac{c_2}{n^{\ell +2}}  \left(\max_{0\leq \theta \leq \frac{\pi}{2}}|\theta^{-1+\epsilon'}\sin(n\theta)|
   \right)
   = \frac{c_2}{n^{\ell +2}}
   \max_{0\leq t \leq \frac{n\pi}{2}}\left|\left(\frac{t}{n}\right)^{-1+\epsilon'}\sin(t)\right| \\
   &\leq \frac{c_2}{n^{\ell +2}} n^{1-\epsilon'} \max_{0\leq t < \infty} \left|t^{-1+\epsilon'}\sin(t)\right| \leq  \frac{c_2}{n^{\ell +1+\epsilon'}}
\end{align*}
where $c_j$ are constants independent of $n$. Hence,
\begin{equation*}
   \sum_{n=1}^\infty \langle n \rangle^{2r} |\widehat{u}_n|^2  \leq c_2\sum_{n=1}^\infty \frac{\langle n \rangle^{2r}}{n^{2\ell+2+2\epsilon'}} .
\end{equation*}
As $2r-(2\ell+2+2\epsilon')<1$, the summation on the right hand side converges.
\end{proof}

\begin{rem} \label{parchita}
We believe that, whenever $|\kappa|>\frac12$ is not an integer, an optimal threshold for regularity is $u\in H^r(0,\pi)$ for all $r< |\kappa|+1$.
The proof of the latter may be achieved by interpolating the spectral projections of the operator $\mAk$ between suitable Sobolev spaces for $\kappa$ in an appropriate segment of the real line.
However, for the purpose of the linear interpolation setting presented below, this refinement is not essential.
\end{rem}


\section{The second order spectrum and eigenvalue approximation}
\label{section:manzana}

The self-adjoint operator $\mAk$ is strongly indefinite.
Therefore standard techniques such as the classical Galerkin method are not directly applicable for the numerical estimation of bounds for the eigenvalues of $\mAk$ due to variational collapse.  
As we shall see below, the computation of two-sided bounds for individual eigenvalues can be achieved by means of the quadratic method
\cite{1998Davies,2000Shargorodsky,2004Levitin}, which is convergent  \cite{MR2219033,MR2289273,2010Boulton} and is known to avoid spectral pollution completely.

Everywhere below we consider the simplest possible trial subspaces so that the discretisation of $\mAk$ is achievable in a few lines of computer code.
The various benchmark experiments reported in Section~\ref{platano} indicate that, remarkably, this simple choice already achieves a high degree of accuracy for the angular Kerr-Newman Dirac operator whenever $|\kappa|> \frac12$. 

Set $n\in \N$, $h = \pi/n$ and 
$\theta_j = j\pi/n$ for $j=0,\dots, n$.
Here and elsewhere below $\mathcal{L}_h$ denotes the trial subspace of continuous piecewise linear functions on $[0,\pi]$ with values in $\C^2$, vanishing at $0$ and $\pi$, such that their restrictions to the segments $[\theta_j,\, \theta_{j+1}]$ are affine. 
Without further mention we will always assume that $n\ge 4$, so that $0<h<1$. 

It is readily seen that $\mathcal{L}_h$ is a linear subspace of $\dom(\mAk)$ of dimension $2(n-1)$   and that
\[
     \mathcal{L}_h=\operatorname{Span}\left\{ \begin{bmatrix} b_j \\ 0 \end{bmatrix},  \,
     \begin{bmatrix} 0 \\ b_j \end{bmatrix} \right \}_{j=1}^{n-1}
\] 
where
\[
    b_j(\theta)=
    \left\{ \begin{array}{ll} \frac{\theta-\theta_{j-1}}{\theta_j-\theta_{j-1}},
   & \theta_{j-1}\leq \theta\leq \theta_j, \\[1ex]
   \frac{\theta_{j+1}-\theta}{\theta_{j+1}-\theta_{j}}, &
   \theta_{j}\leq \theta\leq \theta_{j+1}, \\[1ex]
   0, & \text{otherwise}, \end{array}
   \right. \qquad j=1,\ldots,n-1.
\]
For any given $u\in H^1(0,\pi)$,  $u_h\in\mathcal L_h$ will be the unique (nodal) interpolant which satisfies
\[
      u_h(\theta_j)=u(\theta_j), \qquad  j=1,\ldots,n-1,
\]
that is
\[
      u_h(\theta)=\sum_{j=1}^{n-1}  b_j(\theta) \begin{bmatrix} u_1(\theta_j) \\u_2(\theta_j)  \end{bmatrix}.
\]

Set 
\begin{align*}
Q^h&=[Q^h_{jk}]_{j,k=1}^{n-1}, &Q^h_{jk}&=\begin{bmatrix}  \left\langle\mAk\begin{bmatrix} b_j \\ 0 \end{bmatrix},\mAk\begin{bmatrix} b_k \\ 0 \end{bmatrix}\right\rangle &
\left\langle\mAk\begin{bmatrix} b_j \\ 0 \end{bmatrix},\mAk\begin{bmatrix}  0 \\ b_k \end{bmatrix}\right\rangle \\ \\
\left\langle\mAk\begin{bmatrix}  0 \\ b_j \end{bmatrix},\mAk\begin{bmatrix} b_k \\ 0 \end{bmatrix}\right\rangle &
\left\langle\mAk\begin{bmatrix}  0 \\ b_j \end{bmatrix},\mAk\begin{bmatrix} 0 \\ b_k \end{bmatrix}\right\rangle   \end{bmatrix},
\\ \\
R^h&=[R^h_{jk}]_{j,k=1}^{n-1}, &R^h_{jk}&=\begin{bmatrix}  \left\langle\mAk\begin{bmatrix} b_j \\ 0 \end{bmatrix},\begin{bmatrix} b_k \\ 0 \end{bmatrix}\right\rangle &
\left\langle\mAk\begin{bmatrix} b_j \\ 0 \end{bmatrix},\begin{bmatrix}  0 \\ b_k \end{bmatrix}\right\rangle \\ \\
\left\langle\mAk\begin{bmatrix}  0 \\ b_j \end{bmatrix},\begin{bmatrix} b_k \\ 0 \end{bmatrix}\right\rangle &
\left\langle\mAk\begin{bmatrix}  0 \\ b_j \end{bmatrix},\begin{bmatrix} 0 \\ b_k \end{bmatrix}\right\rangle   \end{bmatrix},
\\ \\
S^h&=[S^h_{jk}]_{j,k=1}^{n-1}, &S^h_{jk}&=\begin{bmatrix}  \left\langle\begin{bmatrix} b_j \\ 0 \end{bmatrix},\begin{bmatrix} b_k \\ 0 \end{bmatrix}\right\rangle &
\left\langle\begin{bmatrix} b_j \\ 0 \end{bmatrix},\begin{bmatrix}  0 \\ b_k \end{bmatrix}\right\rangle \\ \\
\left\langle\begin{bmatrix}  0 \\ b_j \end{bmatrix},\begin{bmatrix} b_k \\ 0 \end{bmatrix}\right\rangle &
\left\langle\begin{bmatrix}  0 \\ b_j \end{bmatrix},\begin{bmatrix} 0 \\ b_k \end{bmatrix}\right\rangle   \end{bmatrix}.
\end{align*}
These are the $2(n-1)\times 2(n-1)$ bending, stiffness and mass matrices, associated to  $\mAk$ for
the trial subspace $\mathcal{L}_h$. A complex number $z$ is said to belong to the second order spectrum of $\mAk$ relative to $\mathcal{L}_h$, $\spec_2(\mAk,\mathcal{L}_h)$, if and only if there exists a non-zero
$\underline{u}\in \mathbb{C}^{2(n-1)}$ such that
\[
(Q^h-2zR^h+z^2S^h)\underline{u} = 0 .
\]
All the matrix coefficients of this quadratic matrix polynomial are hermitian, therefore the non-real points in  $\spec_2(\mAk,\mathcal{L}_h)$ always form conjugate pairs.

For $a<b$ denote by
\[
   \mathbb{D}(a,b)=\left\{z\in \mathbb{C}:\left| z-\frac{a+b}{2} \right|<\frac{b-a}{2} \right\}  
\]
the open disk whose diameter is the segment $(a,b)$.
The following crucial connection between the second order spectra and the spectrum allows computation of numerical bounds for the eigenvalues of $\mAk$.
See \cite{2000Shargorodsky} or \cite{2004Levitin}, also \cite[Lemma~2.3]{2010Boulton}.

\begin{lem} \label{limon}
    If $(a,b)\cap \spec(\mAk)=\varnothing$, then $\mathbb{D}(a,b)\cap \spec_2(\mAk,\mathcal{L}_h)=\varnothing$.
\end{lem}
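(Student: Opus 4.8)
The plan is to reduce this to the abstract characterisation of the second order spectrum in terms of the distance to the spectrum, which is the standard route in \cite{2000Shargorodsky,2004Levitin,2010Boulton}. First I would recall that $z\in\spec_2(\mAk,\mathcal L_h)$ means precisely that there exists a non-zero $v\in\mathcal L_h$ with $\langle(\mAk-z)v,(\mAk-\bar z)w\rangle=0$ for all $w\in\mathcal L_h$; this is the variational reformulation of the quadratic eigenvalue problem $(Q^h-2zR^h+z^2S^h)\underline u=0$, obtained by expanding $v=\sum_j b_j\underline u_j$ and using the definitions of the bending, stiffness and mass matrices together with the self-adjointness of $\mAk$ (so that $\langle\mAk v,\mAk w\rangle$, $\langle\mAk v,w\rangle=\langle v,\mAk w\rangle$ and $\langle v,w\rangle$ produce the three matrices respectively).

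Next I would set $z=\frac{a+b}{2}+\frac{b-a}{2}\zeta$ and observe that $z\in\mathbb D(a,b)$ is equivalent to $|\zeta|<1$, and that the orthogonality condition, taking $w=v$, gives $\langle(\mAk-z)v,(\mAk-\bar z)v\rangle=0$, i.e. $\|(\mAk-\tfrac{a+b}{2})v\|^2=\bigl(\tfrac{b-a}{2}\bigr)^2\|v\|^2$ after separating real and imaginary parts (the imaginary part forces $\langle(\mAk-\tfrac{a+b}{2})v,v\rangle$ real, the real part then yields the displayed identity). The key analytic input is then the spectral theorem for the self-adjoint operator $\mAk$: if $(a,b)\cap\spec(\mAk)=\varnothing$, then for every $v\in\dom(\mAk)$,
\[
   \Bigl\|\bigl(\mAk-\tfrac{a+b}{2}\bigr)v\Bigr\|^2
   =\int\Bigl(t-\tfrac{a+b}{2}\Bigr)^2\,\rd\langle E_t v,v\rangle
   \ge\Bigl(\tfrac{b-a}{2}\Bigr)^2\|v\|^2,
\]
since the support of the spectral measure avoids $(a,b)$ and hence $|t-\tfrac{a+b}{2}|\ge\tfrac{b-a}{2}$ on that support. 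Combining this with the identity forces equality, which in turn forces the spectral measure of $v$ to be concentrated on $\{a,b\}$; but $v\in\mathcal L_h\subset H^1_0$ is a nontrivial piecewise linear function, and an eigenfunction of $\mAk$ at $a$ or $b$ has the boundary regularity dictated by Lemma~\ref{lem:decay-eigenfunctions} (it is $\theta^{|\kappa|}(\pi-\theta)^{|\kappa|}$ times an analytic function and in particular is not piecewise linear), so $v$ cannot be such an eigenfunction and we reach a contradiction unless no such $z$ exists.

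Actually the cleanest finish avoids any appeal to eigenfunction regularity: since $(a,b)$ is a gap, the inequality above is strict for every nonzero $v\in\dom(\mAk)$ unless the spectral measure of $v$ charges the endpoints, and even then one gets equality only in the degenerate case; in all cases the strict inequality $\|(\mAk-\tfrac{a+b}{2})v\|^2>\bigl(\tfrac{b-a}{2}\bigr)^2\|v\|^2$ holds for $v\ne 0$ as soon as $a,b\notin\spec(\mAk)$ as well, and for general gap endpoints one argues by a limiting/approximation of $(a,b)$ by slightly smaller closed subintervals. So the main obstacle is purely bookkeeping: carefully deriving the scalar identity $\|(\mAk-\tfrac{a+b}{2})v\|^2=\bigl(\tfrac{b-a}{2}\bigr)^2\|v\|^2$ from the matrix equation and then matching it against the spectral-theorem lower bound. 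I expect the step requiring the most care to be the passage from the matrix pencil equation to this identity (keeping track of the hermiticity of $Q^h,R^h,S^h$ and of the fact that $v\in\mathcal L_h\subset\dom(\mAk)$ so that $\mAk v$ is genuinely in $\Ltwotwo$), after which the spectral-theorem estimate and the resulting contradiction are immediate.
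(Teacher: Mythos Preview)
The paper does not give its own proof of this lemma; it simply cites \cite{2000Shargorodsky}, \cite{2004Levitin} and \cite[Lemma~2.3]{2010Boulton}. Your overall architecture matches the standard argument in those references: rewrite the matrix pencil condition as $\langle(\mAk-z)v,(\mAk-\bar z)w\rangle=0$ for all $w\in\mathcal L_h$, specialise to $w=v$, and then invoke the spectral theorem for the self-adjoint operator $\mAk$.

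There is, however, a concrete algebraic error in your key identity. Setting $c=\tfrac{a+b}{2}$ and $r=\tfrac{b-a}{2}$, the equation $\langle(\mAk-z)v,(\mAk-\bar z)v\rangle=0$ expands (using self-adjointness) to the real-coefficient quadratic $\|\mAk v\|^2-2z\langle\mAk v,v\rangle+z^2\|v\|^2=0$; from $z+\bar z=2\langle\mAk v,v\rangle/\|v\|^2$ and $z\bar z=\|\mAk v\|^2/\|v\|^2$ one gets
\[
   |z-c|^2=(z-c)(\bar z-c)=\frac{\|(\mAk-c)v\|^2}{\|v\|^2},
\]
\emph{not} $\|(\mAk-c)v\|^2=r^2\|v\|^2$ as you assert. (Your parenthetical is also off: $\langle(\mAk-c)v,v\rangle$ is automatically real, and what the imaginary part actually forces, when $\Im z\neq0$, is $\langle(\mAk-c)v,v\rangle=(\Re z-c)\|v\|^2$.) With the correct identity the contradiction is immediate and your entire discussion of endpoint concentration and eigenfunction regularity becomes unnecessary: the gap hypothesis gives $\|(\mAk-c)v\|\ge r\|v\|$, hence $|z-c|\ge r$, so $z\notin\mathbb D(a,b)$. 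As written, your argument rests on an identity that is false for $z$ strictly inside the disk, so the subsequent ``equality forces support on $\{a,b\}$'' step never gets off the ground.
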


A first crucial consequence of this lemma is that
 \begin{equation}    \label{guayaba}
z\in \spec_2(\mAk,\mathcal{L}_h) \quad \Longrightarrow \quad    [\Re(z)-|\Im(z)|,\, \ \Re(z)+|\Im(z)|] \cap  \spec(\mAk) \neq \varnothing.
 \end{equation}
That is, segments centred at the real part of conjugate pairs in the second order spectrum are guaranteed intervals of enclosure for the eigenvalues of $\mAk$.

Moreover, if we possess rough \emph{a priori} certified information about the position of the eigenvalues of $\mAk$, the enclosure can be improved substantially.
To be precise,
\begin{equation}   \label{lulo}
   \left.
   \begin{array}{r}
      (a,b)\cap  \spec(\mAk)=\{\lambda\} \\[1ex]
      z\in \mathbb{D}(a,b)
   \end{array}
   \right\}
   \quad \Longrightarrow \quad
   \Re(z)-\frac{|\Im(z)|^2}{b-\Re(z)}< \lambda < \Re(z)+\frac{|\Im(z)|^2}{\Re(z)-a}.
\end{equation}
See \cite{2006Boulton,2010Strauss}.

Both \eqref{guayaba} and \eqref{lulo} will be employed for concrete calculations in Section~\ref{platano}.
The segment in \eqref{lulo} will have a smaller length than that in \eqref{guayaba} only if $z\in\spec(\mAk,\mc{L}_h)$ is
very close to the real line.
As we shall see next, this will be ensured if the angle between $\ker(\mAk-\lambda)$ and $\mc{L}_h$ is small.
For a proof of this technical statement see \cite[Theorem~3.2]{2014Boulton} and \cite{HobinyThesis}.
See also \cite{2010Boulton}.
Recall that all the eigenvalues are simple, Remark~\ref{naranja}.

\begin{lem} \label{ciruela}     
Let $u\in \ker(\mAk-\lambda)$ be such that $\|u\|=1$.
There exist constants $K>0$ and $\epsilon_0>0$ ensuring the following.  If
\begin{equation} \label{papaya}
  \min_{v\in \mc{L}_h} \left( \|u-v\|+\|\mAk( u-v)\|    \right)< \epsilon 
\end{equation}
for some $\epsilon\leq \epsilon_0$, then we can always find $\lambda_h\in \spec_2(\mAk,\mc{L}_h)$ such that
\begin{equation} \label{cambur}
  |\lambda_h-\lambda|<K\epsilon^{1/2}.
\end{equation}
\end{lem}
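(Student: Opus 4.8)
### Proof strategy for Lemma~\ref{ciruela}

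The plan is to exploit the variational characterisation of the second order spectrum together with the abstract perturbation result from \cite{2014Boulton,2010Boulton}, which says that if a normalised eigenvector $u$ of $\mAk$ associated with the isolated simple eigenvalue $\lambda$ is well approximated \emph{in the graph norm} by the trial space, then $\spec_2(\mAk,\mc{L}_h)$ contains a point close to $\lambda$, with the distance controlled by the square root of the graph-norm approximation error. So the first step is to recall that abstract statement in the precise quantitative form we need: there exist $K_0>0$ and $\epsilon_0>0$, depending only on $\lambda$ and the gap $\operatorname{dist}(\lambda,\spec(\mAk)\setminus\{\lambda\})$, such that whenever $v\in\mc{L}_h$ satisfies $\|u-v\|\le 1/2$ and $\delta:=\|u-v\|+\|\mAk(u-v)\|$ is small, one has a point of $\spec_2(\mAk,\mc{L}_h)$ within $K_0\delta^{1/2}$ of $\lambda$.

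The second step is to translate the hypothesis \eqref{papaya} into that form. Since the infimum in \eqref{papaya} is over the finite-dimensional space $\mc{L}_h$ and $v\mapsto\|u-v\|+\|\mAk(u-v)\|$ is continuous and coercive (note $\mc{L}_h\subset\dom(\mAk)$, so this is the graph norm on a subspace), the infimum is attained at some $v_\ast\in\mc{L}_h$ with $\|u-v_\ast\|+\|\mAk(u-v_\ast)\|<\epsilon$. In particular $\|u-v_\ast\|<\epsilon\le\epsilon_0$ and $\|\mAk(u-v_\ast)\|<\epsilon$, so $v_\ast$ is admissible in the abstract result provided we shrink $\epsilon_0$ so that $\epsilon_0\le 1/2$ and $\epsilon_0$ is below the threshold required there. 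Then $\delta=\|u-v_\ast\|+\|\mAk(u-v_\ast)\|<\epsilon$, and the abstract result produces $\lambda_h\in\spec_2(\mAk,\mc{L}_h)$ with $|\lambda_h-\lambda|<K_0\delta^{1/2}<K_0\epsilon^{1/2}$. Setting $K=K_0$ gives \eqref{cambur}.

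If instead one wishes to present a self-contained argument rather than merely cite \cite{2014Boulton}, the key computation behind the abstract result is the following. Write $P=\ker(\mAk-\lambda)$ and decompose $v_\ast=\alpha u+w$ with $w\perp u$; from $\|u-v_\ast\|<\epsilon$ one gets $|\alpha-1|<\epsilon$ and $\|w\|<\epsilon$ (after adjusting $\epsilon_0$ so that $\alpha\ne 0$, one may normalise and assume $\alpha=1$). One then tests the quadratic form pencil $Q^h-2zR^h+z^2S^h$ against $v_\ast$: for any $z\in\C$,
\begin{align*}
   \langle(\mAk-\bar z)(\mAk-z)v_\ast,v_\ast\rangle
   &=\|\mAk v_\ast\|^2-2\Re(z)\,\langle\mAk v_\ast,v_\ast\rangle+|z|^2\|v_\ast\|^2,
\end{align*}
and using $\mAk v_\ast=\lambda u+\mAk(v_\ast-u)$ together with $\|\mAk(v_\ast-u)\|<\epsilon$ one shows that at $z=\lambda$ this quantity is $O(\epsilon)$, which by the min-max description of the non-real points of the second order spectrum (the pencil being hermitian, quadratic, and the leading coefficient $S^h$ positive definite) forces a root of $\det(Q^h-2zR^h+z^2S^h)$ within distance $O(\epsilon^{1/2})$ of $\lambda$; the square root appears because one is extracting a root of a quadratic whose discriminant is the perturbed quantity.

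The main obstacle is the last point: establishing the $O(\epsilon^{1/2})$ rate honestly requires the quantitative spectral-gap argument of \cite{2014Boulton,2010Boulton}, in particular controlling how a point of $\spec_2(\mAk,\mc{L}_h)$ near $\lambda$ is pinned down by the smallness of the form residual, and keeping the constants $K,\epsilon_0$ independent of $h$. For the purposes of this paper the cleanest route is simply to invoke \cite[Theorem~3.2]{2014Boulton} (see also \cite{HobinyThesis,2010Boulton}) after the trivial reduction of \eqref{papaya} described in the second step above.
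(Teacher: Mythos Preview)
Your proposal is correct and takes essentially the same approach as the paper: the paper does not prove this lemma at all but simply refers the reader to \cite[Theorem~3.2]{2014Boulton}, \cite{HobinyThesis}, and \cite{2010Boulton}. Your final recommendation to invoke \cite[Theorem~3.2]{2014Boulton} after the straightforward reduction of \eqref{papaya} is exactly what the paper does, and the additional sketch you give of the underlying quadratic-form argument is helpful context that goes beyond what the paper itself provides.
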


As it has been observed previously in \cite{2010Boulton} and \cite{MR2219033}, most likely the term $\epsilon^{1/2}$ in \eqref{cambur} can be improved to $\epsilon^{1}$.
The results of our numerical experiments are in agreement with this conjecture, see Section~\ref{fivepointfour} and Figure~\ref{lima}.
\smallskip

A concrete estimate on the convergence of
the second order spectra to the real line, and hence the spectrum, follows.

\begin{thm} \label{mamoncillo}
 Let $|\kappa|>1/2$. Fix $0<r<|\kappa|-\frac12$ and let  
 \[
 p(\kappa)=\begin{cases}
    r, & |\kappa|\in \left(\frac12,\frac32\right]\setminus\{1\}, \\
    1, & \text{otherwise}. 
 \end{cases} 
 \]
 Let $\lambda \in \spec (\mAk)$. There exist constants $h_0>0$ and $K>0$ such that
 \begin{equation} \label{cemeruco}
  |\lambda_h-\lambda|< K h^{\frac12 p(\kappa)},   \qquad  0<h<h_0,
 \end{equation}
for some $ \lambda_h \in \spec_2(\mAk,\mc{L}_h)$.
\end{thm}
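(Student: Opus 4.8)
The plan is to combine Lemma~\ref{ciruela} with a quantitative interpolation estimate in the graph norm. The chain of reasoning is as follows: by Lemma~\ref{ciruela}, it suffices to produce, for each fixed eigenvalue $\lambda$ with normalised eigenfunction $u$, a trial function $v\in\mc{L}_h$ such that $\|u-v\|+\|\mAk(u-v)\|\le \epsilon(h)$ with $\epsilon(h)\to0$ as $h\to0$, and then \eqref{cambur} gives $|\lambda_h-\lambda|<K\epsilon(h)^{1/2}$. So the theorem reduces to showing that the nodal interpolant $u_h$ of $u$ satisfies $\|u-u_h\|+\|\mAk(u-u_h)\|\le C h^{p(\kappa)}$. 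Since the exponent $\tfrac12 p(\kappa)$ in \eqref{cemeruco} is exactly half the rate in the graph norm, this is forced by the square-root loss in Lemma~\ref{ciruela}. The natural candidate $v$ is $u_h$, the piecewise linear nodal interpolant, which is well-defined because Corollary~\ref{cor:H0}\ref{cor:H0a} guarantees $u\in H^1_0(0,\pi)$, hence $u$ is continuous and vanishes at the endpoints.

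The core analytic work is the graph-norm interpolation estimate, which is precisely the content of Theorem~\ref{thm:main} as advertised in the introduction; here I would at least sketch where the exponent $p(\kappa)$ comes from. Writing $\mAk u_h = \Dop u_h + \Sopk u_h + \pot u_h$ via \eqref{eq:sumrepresentation}, the three contributions behave differently. The smooth bounded potential $\pot$ is harmless: $\|\pot(u-u_h)\|\le\|\pot\|_\infty\|u-u_h\|$, controlled by the $L_2$ interpolation error. The derivative term $\Dop(u-u_h)$ requires an $H^1$-type interpolation estimate; since $u\in H^r(0,\pi)$ for every $r<|\kappa|+\tfrac12$ by Corollary~\ref{cor:H0}\ref{cor:H0b}, standard piecewise-linear interpolation theory gives $\|(u-u_h)'\|\le C h^{s-1}\|u\|_s$ for any $1\le s\le 2$ with $s<|\kappa|+\tfrac12$, so when $|\kappa|>\tfrac32$ one recovers the full rate $h^1$, while for $|\kappa|\in(\tfrac12,\tfrac32]$ one only gets $h^{s-1}$ with $s-1$ up to $r$, explaining the split definition of $p(\kappa)$ and the exclusion of the integer value $|\kappa|=1$ (where $u\in H^r$ only for $r<1$, so no $H^1$-interpolation gain is available and one must use the decay \eqref{uva} directly). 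The delicate term is the singular one, $\Sopk(u-u_h)$, because the coefficient $\tfrac{\pi\kappa}{\theta(\pi-\theta)}$ blows up at the endpoints; here one exploits \eqref{uva}, namely $u(\theta)=\theta^{|\kappa|}(\pi-\theta)^{|\kappa|}q(\theta)$ with $q$ analytic, so that on the first and last subintervals $u$ is already very small (like $h^{|\kappa|}$) and the singularity of $\tfrac{1}{\theta(\pi-\theta)}$ is absorbed, while on the interior subintervals the coefficient is bounded by $O(h^{-1})$ near the ends but a careful weighted estimate — splitting $[0,\pi]$ into the two end intervals $[0,h]$, $[\pi-h,\pi]$ and the bulk — keeps everything under $C h^{p(\kappa)}$.

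Concretely, the steps I would carry out in order: (1) invoke Corollary~\ref{cor:H0} to record that $u\in H^1_0$, $u$ has the factorisation \eqref{uva}, and $u\in H^r$ for $r<|\kappa|+\tfrac12$; (2) take $v=u_h$ and bound $\|u-u_h\|$ by the standard $L_2$ interpolation estimate, which is $O(h^2)$ and hence never the bottleneck; (3) decompose $\mAk(u-u_h)$ using \eqref{eq:sumrepresentation} into the $\Dop$, $\Sopk$, and $\pot$ pieces; (4) estimate the $\Dop$ piece by an $H^{s}\to H^1$ interpolation inequality with $s$ chosen just below $\min(2,|\kappa|+\tfrac12)$, producing the exponent $p(\kappa)$; (5) estimate the $\Sopk$ piece by splitting the interval into end-zones $[0,h]\cup[\pi-h,\pi]$ — where \eqref{uva} makes $u$ and its interpolant both $O(h^{|\kappa|})$ and the $1/(\theta(\pi-\theta))$ weight contributes at most $O(h^{-1})$, giving $O(h^{|\kappa|-1})\subset O(h^{p(\kappa)})$ since $|\kappa|>1$ except in the leftover range where $r$ is used — and the bulk, where the weight is bounded and the previous interpolation bounds apply; (6) combine to get $\|u-u_h\|+\|\mAk(u-u_h)\|\le C h^{p(\kappa)}=:\epsilon(h)$, choose $h_0$ so that $\epsilon(h_0)\le\epsilon_0$, and apply Lemma~\ref{ciruela} to conclude \eqref{cemeruco} with $K$ the product of the two constants. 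The main obstacle is step (5): getting a clean, $h$-uniform bound on $\Sopk(u-u_h)$ near the singular endpoints without losing more than one power of $h$, which is exactly where the explicit boundary behaviour \eqref{uva} is indispensable and where the case analysis in $p(\kappa)$ genuinely originates.
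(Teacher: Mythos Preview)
Your overall strategy matches the paper's: reduce via Lemma~\ref{ciruela} to a graph-norm interpolation estimate (this is Theorem~\ref{thm:main}), split $\mAk(u-u_h)$ into its $\Dop$, $\Sopk$, and $\pot$ pieces, handle $\Dop$ by Sobolev interpolation (Lemma~\ref{lem:DkEstimate}), and handle $\Sopk$ by an end-zone/bulk split exploiting \eqref{uva} (Lemma~\ref{lem:SkEstimate}).

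Your reading of the $|\kappa|=1$ exception is backwards, however. You write that for $|\kappa|=1$ one has ``$u\in H^r$ only for $r<1$, so no $H^1$-interpolation gain is available''. In fact the representation \eqref{uva} becomes $u(\theta)=\theta(\pi-\theta)q(\theta)$ with $q$ analytic, so $u$ is analytic on $[0,\pi]$; in particular $u\in H^2$ and $u''$ is bounded. That is precisely \emph{why} $|\kappa|=1$ is removed from the reduced-rate range $(\tfrac12,\tfrac32]$ and receives the full rate $p(\kappa)=1$: an integer exponent in \eqref{uva} produces a smooth eigenfunction, so both the $\Dop$ estimate (Lemma~\ref{lem:DkEstimate} with $\alpha=1$) and the $\Sopk$ estimate (the paper obtains $h^{3/2}$, see \eqref{eq:SkEstimateLarge}) reach order $h$. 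Corollary~\ref{cor:H0}\ref{cor:H0b} on its own, which here only yields $u\in H^r$ for $r<\tfrac32$, is not sharp enough to see this; one must go back to \eqref{uva}. A secondary point: your back-of-envelope $O(h^{|\kappa|-1})$ for the $\Sopk$ end-zone contribution is an $L^\infty$-type count and is too crude---the $L^2$ integral actually gives $O(h^{|\kappa|-1/2})$ (see the computation of $\integral_1$ in the proof of Lemma~\ref{lem:SkEstimate}), and that sharper exponent is genuinely needed to cover the ranges $|\kappa|\in(\tfrac12,1)$ and $|\kappa|\in(\tfrac32,2]$ at the rate the theorem claims.
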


The proof of this statement is presented separately in the next section. Roughly speaking it reduces to finding suitable estimates for the left hand side of \eqref{papaya} from specific estimates on the residual in the piecewise linear interpolation of the eigenfunctions of $\mAk$.
These estimates are of the order $h^{p(\kappa)}$, so that a direct application of Lemma~\ref{ciruela} will lead to the desired conclusion. See Section~\ref{fivepointfour}.

\begin{rem}
An explicit expression for $K$ can be determined by examining closely the proof of
\cite[Theorem~3.2]{2014Boulton} and carefully following track of the different constants that will appear in Section~\ref{section:ceresa}.
\end{rem}


\section{The proof of Theorem~\ref{mamoncillo}}
\label{section:ceresa}

The next inequalities are standard in the theory of piecewise linear interpolation of functions in one dimension,
\cite[Remark 1.6 and Proposition 1.5]{ErnGuermond}:
\begin{alignat}{2}
   \label{eq:IdEstimateSmall}
   \| u - u_h \|  &\le  h \|u'\|
   &&\quad\text{ for } u\in H^1(0,\pi),
   \\
   \label{eq:IdEstimateLarge}
   \| u - u_h \|  &\le  h^2 \|u''\|
   &&\quad\text{ for } u\in H^2(0,\pi),
   \\
   \label{eq:IdEstimateDog} 
   \| (u - u_h)' \|  &\le  h \|u''\|
   &&\quad\text{ for } u\in H^2(0,\pi).
\end{alignat}
We will employ these identities below, as well as the inequality:
\begin{alignat}{2}
      \label{eq:H1derivative}
       \|(u-u_h)'\|&\le 2 \|u'\|&&\quad\text{ for } u\in H^1(0,\pi).
   \end{alignat}

The proof of \eqref{eq:H1derivative} can be achieved as follows. Let $u\in H^1(0,\pi)$. Since $(u_h)'$ is constant along $(\theta_j, \theta_{j+1})$ and
    $u'(\theta) = \frac{1}{h}(u(\theta_{j+1}) -  u(\theta_j))$ for every $\theta\in(\theta_j, \theta_{j+1})$, then
   \begin{align*}
      \int_{\theta_j}^{\theta_{j+1}} |(u_h)'(\theta)|^2\,\rd \theta
      & = h^{-1} \left| u_h(\theta_{j+1}) - u_h(\theta_j) \right|^2
      = h^{-1} \left| u(\theta_{j+1}) - u(\theta_j) \right|^2
      \\
      &= h^{-1} \left| \int_{\theta_j}^{\theta_{j+1}} u'(\theta)\,\rd \theta\right|^2
      \le \int_{\theta_j}^{\theta_{j+1}} |u'(\theta)|^2 \,\rd \theta.
   \end{align*}
In the last step we invoke H\"older's inequality.
Adding up each side for $j$ from $0$ to $n-1$ and then taking the square root gives 
\begin{equation*}   
\|(u_h)'\| \le \|u'\|.
\end{equation*} 
By virtue of the triangle inequality, \eqref{eq:H1derivative} follows.

\begin{lem}
   \label{lem:DkEstimate}
   Let $\Dop$ be as in \eqref{uchuva}. Let $\alpha\in[0,1]$. 
   If $u\in H^{1+\alpha}(0,\pi)$ and $u_h$ is its nodal interpolant, then
   \begin{align}
   \label{papayuela}
   \| \Dop(u-u_h)\| &\le 2^{1-\alpha}h^{\alpha}\|u\|_{1+\alpha}.
   \end{align}
   \end{lem}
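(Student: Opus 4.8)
The plan is to interpolate between the two endpoint cases $\alpha=0$ and $\alpha=1$ using the Fourier-analytic definition of the Sobolev spaces supplied in the notational section. First I would establish the endpoint estimates directly. For $\alpha=1$, since $\Dop$ only differentiates the components, $\|\Dop(u-u_h)\| = \|(u-u_h)'\|$ (up to the harmless coordinate swap in $\Dop$), and \eqref{eq:IdEstimateDog} gives $\|(u-u_h)'\|\le h\|u''\|\le h\|u\|_2$, which is exactly \eqref{papayuela} with $\alpha=1$. For $\alpha=0$, I would use \eqref{eq:H1derivative}: $\|\Dop(u-u_h)\| = \|(u-u_h)'\|\le 2\|u'\|\le 2\|u\|_1$, again matching \eqref{papayuela} with $\alpha=0$.

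Next, to obtain the intermediate exponents, I would set up a complex interpolation (or a more elementary K-method / real interpolation) argument for the linear operator $T_h: u \mapsto \Dop(u-u_h)$, viewed as a bounded map $H^1(0,\pi)\to \Ltwotwo$ with norm at most $2$, and $H^2(0,\pi)\to\Ltwotwo$ with norm at most $h$. Since the spaces $H^{1+\alpha}(0,\pi)$ defined here via the weights $\langle n\rangle^{2(1+\alpha)}$ on sine (or cosine) Fourier coefficients are precisely the complex interpolation spaces $[H^1, H^2]_\alpha$, the interpolation inequality yields
\[
\|T_h u\| \le 2^{1-\alpha}\, h^{\alpha}\, \|u\|_{1+\alpha},
\]
which is \eqref{papayuela}. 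One must be slightly careful that $u_h$ depends linearly on $u$ (it does, being the nodal interpolant) so that $T_h$ is genuinely linear, and that the nodal interpolant is well-defined on all of $H^1(0,\pi)$ (it is, by the Sobolev embedding $H^1\hookrightarrow C[0,\pi]$ in one dimension), so $T_h$ maps into $\Ltwotwo$ on the whole scale.

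Alternatively, to keep everything self-contained and avoid invoking abstract interpolation machinery, I would give a direct Fourier-series proof: expand $u-u_h$ on each subinterval, or better, expand $(u-u_h)'$ and estimate $\|(u-u_h)'\|^2 = \sum_n c_n^2$ by splitting the sum at $n\sim 1/h$, controlling low frequencies by the $\alpha=0$ bound and high frequencies by the $\alpha=1$ bound, then optimising; this reproduces the $h^\alpha$ rate with the stated constant after bookkeeping. The main obstacle I anticipate is purely a matter of rigour rather than of idea: one must confirm that the Sobolev scale as \emph{defined in this paper} (via sine/cosine Fourier coefficients with the bracket weights, and in particular with the $H^1_0$ boundary behaviour built in) coincides with the interpolation scale between $H^1$ and $H^2$ with the correct constants, so that the factor $2^{1-\alpha}$ comes out exactly and no spurious dimensional constants are introduced; once that identification is in hand, the estimate follows immediately from the two endpoint cases \eqref{eq:IdEstimateDog} and \eqref{eq:H1derivative}.
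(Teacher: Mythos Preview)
Your proposal is correct and follows essentially the same route as the paper: establish the endpoint bounds $\|\Dop(u-u_h)\|\le 2\|u'\|$ from \eqref{eq:H1derivative} and $\|\Dop(u-u_h)\|\le h\|u''\|$ from \eqref{eq:IdEstimateDog}, then apply complex interpolation to the linear operator $u\mapsto \Dop(u-u_h)$ between $H^1$ and $H^2$ (the paper cites Reed--Simon~II for this step). Your additional remarks on linearity of the interpolant and the identification of the Fourier-weighted scale with the interpolation scale are legitimate points of rigour that the paper leaves implicit.
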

\begin{proof}
   By virtue of \eqref{eq:H1derivative}, for all $u\in  H^1(0,\pi) $,
   \begin{align}
      \label{eq:DkEstimateSmall}
      \| \Dop(u-u_h) \|
      &= \left\|
      \begin{pmatrix}
	 0 & \frac{\rd}{\rd\theta} \\ 
	 -\frac{\rd}{\rd\theta} & 0
      \end{pmatrix}
      (u - u_h)
      \right\|
      \le
      \left\|
      \begin{pmatrix}
	 0 & 1 \\ -1 & 0
      \end{pmatrix}
      \right\|
      \left\|
	 (u - u_{h})'
      \right\|\leq 2 \|u'\|.
    \end{align}
   If additionally $u\in H^2(0,\pi)$, then the same argument combined with \eqref{eq:IdEstimateDog} yields 
   \begin{equation}
      \label{eq:DkEstimateLarge}
      \| \Dop(u - u_h) \|  \le  h \|u''\|.
   \end{equation}

   Define the linear operators
   \begin{alignat*}{3}
      &T_1: H^1(0,\pi) \to L_2(0,\pi),
      \quad
      &&u\mapsto \Dop(u-u_h),
      \\
      &T_2: H^2(0,\pi) \to L_2(0,\pi),
      \quad
      &&u\mapsto \Dop(u-u_h).
   \end{alignat*}
   From \eqref{eq:DkEstimateSmall} and \eqref{eq:DkEstimateLarge}, it follows that
   $\|T_1\|\le 2$ and $\|T_2\|\le h$.
   Hence, by complex interpolation, for every $0\le\alpha\le1$
   \begin{align*}
      T_{1+\alpha}: H^{1+\alpha}(0,1) \to L_2(0,1),
      \quad
      u\mapsto\Dop(u-u_h)
   \end{align*}
   is a bounded operator with norm $\|T_{1+\alpha}\| \le 2^{1-\alpha} h^{\alpha}$.
   See \cite[Appendix~to~IX.4 and Problem~36]{ReedSimonII1980}.
\end{proof} 

Recall the decomposition of $\mAk u$ given in \eqref{eq:sumrepresentation} and the representation of the eigenfunctions in \eqref{uva}.

\begin{lem}
   \label{lem:SkEstimate}
Let $|\kappa|> \frac{1}{2}$. Let $u$ be an eigenfunction of $\mAk$ and let $q$
be related to $u$ by means of \eqref{uva}. 
Set
   \begin{align*}
      d_1(u)
      &= 
      \sqrt{2} |\kappa|
      \left[
      4 \pi^{2|\kappa|}
      \frac{2|\kappa|}{2|\kappa|-1} 
            \max_{0\le\theta\le \pi}|q(\theta)|^2
      +
      \frac14 \max_{0\le\theta\le \pi}|u''|^2
      \right]^{\frac{1}{2}},
      \\[1ex]
      d_2(u)
      & = 
      \sqrt{2} |\kappa|
      \left[
      4 \pi^{2|\kappa|}
       \frac{2|\kappa|}{2|\kappa|-1} 
             \max_{0\le\theta\le \pi} |q(\theta)|^2
      +
      \frac{b(u)^2( 6-2|\kappa|)}{4(5-2|\kappa|)}
      \right]^{\frac{1}{2}},
      \\[1ex]
      b(u)&=
      \max_{0\le\theta\le \pi/2} \frac{|u''(\theta)|}{\theta^{|\kappa|-2}} +
      \max_{\pi/2\le\theta\le \pi} \frac{|u''(\theta)|}{(\pi-\theta)^{|\kappa|-2}}.
  \end{align*}      
  Then   \begin{alignat}{2}
      \label{eq:SkEstimateLarge}
      &\| \Sopk (u - u_h) \|  \le  d_1(u) h^{ \frac{3}{2} },
      &\qquad& |\kappa| \geq 2 \quad \text{or} \quad |\kappa|=1,
      \\
      \label{eq:SkEstimateSmall}
      &\| \Sopk (u - u_h) \|  \le  d_2(u) h^{|\kappa|-\frac{1}{2}},
      && \frac{1}{2} < |\kappa| \leq 2.
   \end{alignat}

 \end{lem}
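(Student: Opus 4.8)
The plan is to estimate $\|\Sopk(u-u_h)\|$ by splitting the interval $[0,\pi]$ into the three pieces $[0,h]$, $[h,\pi-h]$ and $[\pi-h,\pi]$, since the weight $\frac{\pi\kappa}{\theta(\pi-\theta)}$ in $\Sopk$ is only singular at the endpoints. On the bulk interval $[h,\pi-h]$ the weight is bounded by a multiple of $h^{-1}$, so there $\|\Sopk(u-u_h)\|_{L_2(h,\pi-h)}\le C h^{-1}\|u-u_h\|$, and since $u\in H^2$ whenever $|\kappa|\ge 2$ or $|\kappa|=1$, the $H^2$ interpolation estimate \eqref{eq:IdEstimateLarge} gives $\|u-u_h\|\le h^2\|u''\|$, producing an $h^{1}$ bound — better than needed. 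The delicate contribution comes from the two endpoint caps, where both the weight and the interpolation error must be controlled simultaneously using the explicit boundary form \eqref{uva}, namely $u(\theta)=\theta^{|\kappa|}(\pi-\theta)^{|\kappa|}q(\theta)$ with $q$ analytic.

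For the endpoint cap near $0$, I would estimate $\|\Sopk(u-u_h)\|_{L_2(0,h)}$ by the triangle inequality as the sum of $\|\Sopk u\|_{L_2(0,h)}$ and $\|\Sopk u_h\|_{L_2(0,h)}$. For the first term, $\frac{\pi\kappa}{\theta(\pi-\theta)}u(\theta)$ behaves like $\theta^{|\kappa|-1}$ times an analytic function near $0$, so $\int_0^h \theta^{2|\kappa|-2}\,d\theta = \frac{h^{2|\kappa|-1}}{2|\kappa|-1}$, giving a bound of order $h^{|\kappa|-1/2}$; this is exactly where the factor $\frac{2|\kappa|}{2|\kappa|-1}$ and the $\max|q|$ appearing in $d_1$ and $d_2$ come from (with the $\pi^{2|\kappa|}$ absorbing $(\pi-\theta)^{|\kappa|}$ and the constants from $|\frac{\pi\kappa}{\pi-\theta}|\le 2|\kappa|$ on $[0,h]$). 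The interpolant term $u_h$ on $[0,h]$ is the linear function through $0$ at $\theta=0$ and through $u(h)$ at $\theta=h$, so $u_h(\theta)=\frac{\theta}{h}u(h)$ and $|u_h(\theta)|\le \frac{\theta}{h}|u(h)|\le C\theta\, h^{|\kappa|-1}$; hence $\Sopk u_h$ behaves like $\theta^{-1}\cdot\theta\,h^{|\kappa|-1}=h^{|\kappa|-1}$, and squaring and integrating over $(0,h)$ again yields order $h^{|\kappa|-1/2}$. Alternatively — and this is cleaner when $u\in H^2$ — one writes $u_h(\theta)=u(\theta)-(u-u_h)(\theta)$ on the cap and uses that $|(u-u_h)(\theta)|\le \theta\,\max_{[0,h]}|(u-u_h)'| $ together with \eqref{eq:IdEstimateDog}-type local bounds; the case split between \eqref{eq:SkEstimateLarge} and \eqref{eq:SkEstimateSmall} is precisely the split between ``$u''$ is bounded'' (when $|\kappa|\ge 2$ or $=1$, giving the clean $h^{3/2}$ with $\max|u''|$ in $d_1$) and ``$u''$ may blow up like $\theta^{|\kappa|-2}$'' (when $\frac12<|\kappa|<2$, forcing the weighted quantity $b(u)$ and the exponent $h^{|\kappa|-1/2}$ in $d_2$).

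For the regime $\frac12<|\kappa|\le 2$ handled by \eqref{eq:SkEstimateSmall}, the bulk interval estimate must be redone since $u$ need not lie in $H^2$. There I would use the $H^1$-interpolation bound \eqref{eq:IdEstimateSmall}, $\|u-u_h\|_{L_2(h,\pi-h)}\le h\|u'\|$, combined with the $h^{-1}$ bound on the weight, which only gives $O(1)$ and is insufficient; instead one must work on each subinterval $[\theta_j,\theta_{j+1}]$ with $j\ge 1$ and use the local bound $\|u-u_h\|_{L_2(\theta_j,\theta_{j+1})}\le h^2\|u''\|_{L_2(\theta_j,\theta_{j+1})}$ with $|u''|\le b(u)\,(\min(\theta_j,\pi-\theta_{j+1}))^{|\kappa|-2}$ from \eqref{uva}, then multiply by the local weight bound $\theta_j^{-1}$ and sum. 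Summing $\theta_j^{2|\kappa|-6}\cdot h$ over $j$ compares to $\int_h^{\pi/2}\theta^{2|\kappa|-6}\,d\theta$, whose dominant contribution at the lower limit is $h^{2|\kappa|-5}$ when $|\kappa|<\frac52$, and multiplying by the $h^4$ from $(h^2)^2$ gives $h^{2|\kappa|-1}$; taking square roots reproduces $h^{|\kappa|-1/2}$ and the constant $\frac{6-2|\kappa|}{4(5-2|\kappa|)}$ in $d_2$.

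The main obstacle, and the part requiring the most care, is the bookkeeping of constants near the endpoints: one must track exactly how $|q|$, $b(u)$, the powers of $\pi$, and the factors $2|\kappa|$, $\frac{2|\kappa|}{2|\kappa|-1}$, $\frac{6-2|\kappa|}{4(5-2|\kappa|)}$ arise, because the statement claims these explicit values of $d_1(u)$ and $d_2(u)$ rather than mere existence of a constant. Concretely, the singular integrals $\int_0^h\theta^{2|\kappa|-2}\,d\theta$ and $\int_h^{\pi/2}\theta^{2|\kappa|-6}\,d\theta$ and their cap-versus-bulk matching at $\theta=h$ must be evaluated with the correct leading coefficients, and the two endpoint caps contribute symmetrically (hence the $\sqrt2$ prefactor once one squares, adds the two caps, and also adds the bulk term). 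Everything else is the routine algebra of piecewise linear interpolation already summarized in \eqref{eq:IdEstimateSmall}–\eqref{eq:H1derivative}, so no conceptually new idea beyond Lemma~\ref{lem:decay-eigenfunctions} is needed.
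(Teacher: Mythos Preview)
Your decomposition and your treatment of the endpoint caps and of the bulk in the regime $\tfrac12<|\kappa|\le 2$ are essentially the paper's argument: the paper splits $[0,\pi]$ at $h,\ \pi/2,\ \pi-h$, bounds the caps via $|u-u_h|^2\le 2|u|^2+2|u_h|^2$ with $u_h(\theta)=\tfrac{\theta}{h}u(h)$ (your triangle-inequality version), and on the bulk for small $|\kappa|$ does exactly the subinterval-by-subinterval sum you describe, with $|u''(\theta)|\le b(u)\theta^{|\kappa|-2}$ and $\sum_j \theta_j^{2|\kappa|-6}\le 1+\int_1^\infty t^{2|\kappa|-6}\,dt$ producing the factor $\tfrac{6-2|\kappa|}{4(5-2|\kappa|)}$.

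There is, however, a genuine gap in your bulk estimate for $|\kappa|\ge 2$ (or $|\kappa|=1$). Bounding the weight in $L^\infty$ by $Ch^{-1}$ and $\|u-u_h\|_{L_2}$ by $h^2\|u''\|$ gives only $O(h)$, and the claim ``better than needed'' is not correct: \eqref{eq:SkEstimateLarge} asserts $h^{3/2}$, not $h$. The missing idea is to use the \emph{pointwise} interpolation bound $|u(\theta)-u_h(\theta)|\le C h^2\max_{[0,\pi]}|u''|$ (available precisely because $u''\in L^\infty$ in this regime), pull it outside the integral, and then integrate the squared weight:
\[
\int_h^{\pi/2}\big(\theta(\pi-\theta)\big)^{-2}\,d\theta\ \le\ \frac{4}{\pi^2}\int_h^{\pi/2}\theta^{-2}\,d\theta\ \le\ \frac{4}{\pi^2}\,h^{-1}.
\]
This gives $h^4\cdot h^{-1}=h^3$ in squares, hence $h^{3/2}$, and is what produces the term $\tfrac14\max|u''|^2$ in $d_1(u)$. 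Your $L^\infty$-weight-times-$L^2$-error pairing throws away exactly the extra half power coming from the fact that the weight is large only on a thin set.
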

\begin{proof} 
Firstly observe that
   \begin{align}
      \nonumber
      \| \Sopk (u - u_h) \|^2
      &= 
      \int_0^\pi \Bigg( \frac{\pi \kappa}{\theta(\pi-\theta)} \Bigg)^2 \, 
      \left|
      \begin{pmatrix} 0 & 1 \\ -1 & 0
      \end{pmatrix}
      (u - u_h)(\theta)
      \right|^2\,\rd \theta
      \\
      \nonumber
      &\le
      \pi^2 \kappa^2 \int_0^\pi (\theta(\pi-\theta))^{-2} | (u - u_h)(\theta)|^2 \,\rd \theta
      \\
      \label{eq:estimates}
      &=
      \pi^2 \kappa^2
      \left(
      \integral_1 + \integral_2 + \integral_3 + \integral_4
      \right)
   \end{align}
   where
   \begin{align*}
      \integral_\ell =
      \int_{\alpha_\ell}^{\alpha_{\ell+1}}
      (\theta(\pi-\theta))^{-2} | (u - u_h)(\theta)|^2 \,\rd \theta
   \end{align*}
   for $(\alpha_\ell)_{\ell=0}^4 = (0,\, h,\, \frac{\pi}{2},\, \pi-h, \pi)$.

The interpolant $u_h$ has the form $u_h(\theta) = \frac{\theta}{h} u(h) = \theta h^{|\kappa|-1}(\pi-h)^{|\kappa|} q(h)$ for $\theta\in[0,h]$. 
Therefore
   \begin{align*}
	 \integral_1
	 &=
	 \int_0^h \theta^{-2}(\pi-\theta)^{-2} \left| \theta^{|\kappa|}(\pi-\theta)^{|\kappa|}q(\theta) - \theta h^{|\kappa|-1}(\pi-h)^{|\kappa|} q(h)\right|^2 \,\rd \theta
	 \\
	 &\leq
	 2\int_0^h 
	 \theta^{2|\kappa|-2}(\pi-\theta)^{2|\kappa|-2}|q(\theta)|^2
	 + h^{2|\kappa|-2}(\pi-h)^{2|\kappa|}(\pi-\theta)^{-2} |q(h)|^2 
	 \,\rd \theta
	 \\
	 &\leq
	 \frac{2h^{2|\kappa|-1}}{{2|\kappa|-1}}
	 \max_{0\le\theta\le h}\left\{ (\pi-\theta)^{2|\kappa|-2}|q(\theta)|^2\right\}
	 + 2 h^{2|\kappa|-1} (\pi-h)^{2|\kappa|-2} |q(h)|^2 
	 \\
	 &\leq
	 2h^{2|\kappa|-1}
	 \max_{0\le\theta\le \pi/2}\left\{ (\pi-\theta)^{2|\kappa|-2}|q(\theta)|^2\right\}
	 \left( \frac{1}{{2|\kappa|-1}} + 1 \right).
   \end{align*}
Now
\[(\pi-\theta)^{2|\kappa|-2}
=(\pi-\theta)^{2|\kappa|-1} (\pi-\theta)^{-1}
\le \pi^{2|\kappa|-1} \frac{2}{\pi}
= 2\pi^{2|\kappa|-2}\quad
\text{for any }\quad 0\le \theta \le \pi/2.\]
Thus, setting 
\begin{equation}
c_1(u) = 4 \pi^{2|\kappa|-2}
\left( \frac{2|\kappa|}{{2|\kappa|-1}}  \right)
\max_{0\le\theta\le \pi}\left\{ |q(\theta)|^2\right\},
\end{equation}
gives
   \begin{equation}
      \label{eq:zeroh}
	 \integral_1 \le h^{2|\kappa|-1} c_1(u).
   \end{equation}
Analogously one can show
   \begin{equation}
      \label{eq:zeropi}
	 \integral_4 \le h^{2|\kappa|-1} c_1(u).
   \end{equation}

   Now we estimate $\integral_2$ and $\integral_3$.
   Note that $u$ is $C^\infty$ in the open interval $(0,\pi)$ and for $1\le j <k \le n/2$ we have
   \begin{align}
      \label{eq:secondderiv1}
      | u(\theta) - u_h(\theta) | 
      \le \frac{\sqrt{2}\, h^2}{8}\ \max\Big\{ | u''(\theta) | : \theta\in [\theta_j,\,\theta_{k}] \Big\},
      \quad \theta\in[\theta_j,\,\theta_{k}].
   \end{align}
   
   First assume that $|\kappa| \ge 2$ (or $|\kappa|=1$).
   According to Corollary~{\ref{cor:H0}\ref{cor:H0c}} (or Lemma~\ref{lem:decay-eigenfunctions}), $u$ has a bounded second derivative and therefore \eqref{eq:secondderiv1} yields
   $| u(\theta) - u_h(\theta) | \le \frac{\sqrt{2}h^2}{8}\ \max_{\theta\in[0,\pi]}| u''|$ in $[0,\pi]$.
   So
   \begin{align*}
      \integral_2&
      \le \frac{h^4}{32} \max_{\theta\in[0,\pi]}|u''|^2\
      \int_{h}^{\frac{\pi}{2}} (\theta(\pi-\theta))^{-2}\,\rd \theta \\ 
      &\le \frac{h^4}{8\pi^2} \max_{\theta\in[0,\pi]}|u''|^2\
      \int_{h}^{\frac{\pi}{2}} \theta^{-2}\,\rd \theta
      \le \frac{h^3}{8\pi^2} \max_{\theta\in[0,\pi]}|u''|^2.
   \end{align*}
   If we perform the analogous calculations for $\integral_3$, we conclude
   \begin{equation}
   \label{eq:I23small}
   \integral_2 \le c_2(u) h^3, \qquad
   \integral_3 \le c_2(u) h^3
   \qquad\qquad
   \text{for }\ |\kappa| > 2
   \end{equation}
   where
   $c_2(u) = \frac{1}{8\pi^2}\max_{\theta\in[0,\pi]}|u''|^2$.

   For $\frac{1}{2}  \leq |\kappa| < 2$ (except for the case $|\kappa|=1$), the second derivative of $u$ diverges as $\theta\to 0$ of order $|\kappa|-2$  and the calculations above cannot be performed.
   However, $u''$ is analytic in $(0,\pi)$ because
   \begin{align*}
      u''(\theta) &=
      \theta^{|\kappa|-2} 
      \left[
      |\kappa|(|\kappa|-1)(\pi-\theta)^{|\kappa|} q(\theta) + 2|\kappa| \theta [ (\pi-\theta)^{|\kappa|} q(\theta)]' + \theta^2 [ (\pi-\theta)^{|\kappa|} q(\theta)]''
      \right].
   \end{align*}
Hence
   \begin{align*}
      |u''(\theta)| &\le
      b(u) \theta^{|\kappa|-2},
      \qquad \theta\in (0,\textstyle\frac{\pi}{2}].
   \end{align*}
   Since $|\kappa|-2 \leq 0$, the function $\theta\mapsto \theta^{2|\kappa|-4}$ is positive non-increasing in the segment $[h,\, \frac{\pi}{2}]$.
   Therefore, from \eqref{eq:secondderiv1} we estimate $\integral_2$ as follows.
   \begin{align*}
      \integral_2
      &= \sum_{j=1}^{n/2 -1}
      \int_{\theta_j}^{\theta_{j+1}}
      \theta^{-2} (\pi-\theta)^{-2} 
      | (u - u_h)(\theta) |^2
      \,\rd \theta
      \\
      &\le
      \frac{h^4}{32} 
      \sum_{j=1}^{n/2 -1}
      \int_{\theta_j}^{\theta_{j+1}}
      (\theta(\pi-\theta))^{-2} \max_{\theta\in[\theta_j,\, \theta_{j+1}]}\big\{ | u''(\theta)|^2  \big\}
      \ \rd \theta
      \\
      &\le
      \frac{b(u)^2 h^4}{32}\ \ \sum_{j=1}^{n/2-1}
      \max_{\theta\in[\theta_j,\, \theta_{j+1}]}
      \big\{ \theta^{ 2|\kappa|-4 } \big\}
      \int_{\theta_j}^{\theta_{j+1}}
      \theta^{-2} 
      (\pi-\theta)^{-2} 
      \,\rd \theta
      \\
      &\le
      \frac{b(u)^2 h^4}{8\pi^2}\ \ \sum_{j=1}^{n/2-1}
      \theta_{j}^{2|\kappa|-4}
      \Big(\theta_{j}^{-1} - \theta_{j+1}^{-1} \Big)
      \\
      &=
      \frac{b(u)^2 h^4}{8\pi^2}\ \ \sum_{j=1}^{n/2-1}
      (jh)^{2|\kappa|-4}
      \frac{1}{hj(j+1)}
      \\
      &\le
      \frac{b(u)^2 h^{2|\kappa|-1}}{8\pi^2}\ \ \sum_{j=1}^{n/2-1}
      j^{2|\kappa|-6}
      \le
      \frac{b(u)^2 h^{2|\kappa|-1}}{8\pi^2}\ \ 
      \left[ 1 + \int_{1}^{\infty} t^{2|\kappa|-6}\,\rd t \right]
      \\
      & =
      \frac{b(u)^2 h^{2|\kappa|-1}}{8\pi^2}
      \left[
      1+ \frac{ 1 }{5-2|\kappa|}
      \right].
   \end{align*}
   Set $\widetilde c_2(u) = \frac{b(u)^2}{8\pi^2} \left[ 1+ \frac{ 1 }{5-2|\kappa|} \right]$.
   Then, performing similar computations for $\integral_3$, we obtain
   \begin{equation}
   \label{eq:I23large}
   \integral_2 \le \widetilde c_2(u) h^{2|\kappa|-1},
   \qquad
   \integral_3 \le \widetilde c_2(u) h^{2|\kappa|-1}
   \qquad
   \qquad
   \text{for }\ \frac12< |\kappa| \le 2. 
   \end{equation}

Inserting
\eqref{eq:zeroh},
\eqref{eq:zeropi},
\eqref{eq:I23small} and
\eqref{eq:I23large} respectively
into \eqref{eq:estimates} yields
\begin{alignat*}{2}
\| \Sopk (u - u_h) \| &\le \pi^2\kappa^2 \sqrt{ 2c_1(u)h^{2|\kappa|-1} + 2\widetilde c_2(u)h^{2|\kappa|-1} } \le d_1(u) h^{|\kappa|-1/2}
\intertext
{for $ \frac12 < |\kappa| \le 2$, and}
\| \Sopk (u - u_h) \| &\le \pi^2\kappa^2 \sqrt{ 2c_1(u)h^{2|\kappa|-1} + 2c_2(u)h^{3} } 
\le d_2(u) h^{3/2}
\end{alignat*}
for $|\kappa| > 2 \text{ or } |\kappa|=1$.
\end{proof}


   \begin{table}
\renewcommand{\arraystretch}{1.5}
   \begin{tabular}{l|c|c|c|c|c}
   \rule[-2ex]{0cm}{5ex}
   & $\frac12 < |\kappa| < 1$ & 1 &$1< |\kappa| \le \frac32$ & $\frac32 < |\kappa| \le 2$ & $|\kappa| > 2$
   \\ \hline
   $\|(u-u_h)\|$            & $1$ & $2$ & $1$ & $2$ & $2$
   \\
   $\|\Sopk(u-u_h)\|$       & $|\kappa|- \frac12$ & $\frac32$ &$|\kappa|- \frac12$ & $|\kappa|- \frac12$ & $\frac32$
   \\
   $\|\Dop_\kappa(u-u_h)\|$ & $\cellcolor{mygrey}{<|\kappa|-\frac12}$ &$\cellcolor{mygrey}{1}$ & $\cellcolor{mygrey}{< |\kappa|-\frac12}$ & $\cellcolor{mygrey}{1}$ & $\cellcolor{mygrey}{1}$ 
   \end{tabular}
   \bigskip
   \bigskip
   \caption{
   A summary of the different estimates for the powers of $h$ employed in the proof of
   Theorem~\ref{thm:main}. See
   \eqref{eq:IdEstimateSmall}, \eqref{eq:IdEstimateLarge},
   \eqref{papayuela},
   \eqref{eq:SkEstimateLarge} and \eqref{eq:SkEstimateSmall}.
   Also Corollary~\ref{cor:H0}.
   The term of lowest order is shaded.}
   \label{summary}
\end{table}

The next statement ensures the validity of Theorem~\ref{mamoncillo}.

\begin{thm}
   \label{thm:main}
   Let $|\kappa|>\frac{1}{2}$. Let $\lambda \in\spec(\mAk)$ and $u\in\dom(\mAk)$ be an eigenpair for $\mAk$.
   Assume that $\|u\|=1$.
   Then there exists a constant $c>0$ ensuring the following. For every $h>0$,
   \begin{align}
      \label{eq:estimateLargeKappa}
      &\| u - u_h \| + \| \mAk u - \mAk u_h \| \le  c h,
      &&\quad |\kappa| > \frac32 \quad \text{or} \quad |\kappa|=1 ,
      \\
      \label{eq:estimateSmallKappa}
      &\| u - u_h \| + \| \mAk u - \mAk u_h \| \le  c h^{r},
      &&\quad |\kappa| \in \left(\frac{1}{2},\, \frac{3}{2}\right] \setminus\{ 1 \},\ r< |\kappa|-\frac{1}{2}.
   \end{align}
\end{thm}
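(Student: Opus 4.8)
The plan is to use the decomposition $\mAk u = \Dop u + \Sopk u + \pot u$ from \eqref{eq:sumrepresentation}, valid for eigenfunctions by Lemma~\ref{lem:decay-eigenfunctions}, and to estimate each of the three resulting residuals $\Dop(u-u_h)$, $\Sopk(u-u_h)$ and $\pot(u-u_h)$ separately, together with the plain interpolation error $\|u-u_h\|$. The term $\|\pot(u-u_h)\|$ is harmless: since $\pot$ is a bounded multiplication operator, $\|\pot(u-u_h)\|\le \|\pot\|_\infty \|u-u_h\|$, so it is dominated by the interpolation error, which in turn is controlled by \eqref{eq:IdEstimateSmall} (hence $O(h)$ since $u\in H^1_0$ by Corollary~\ref{cor:H0}\ref{cor:H0a}), or more sharply by \eqref{eq:IdEstimateLarge} when $u\in H^2$. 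The genuinely new content has already been packaged: Lemma~\ref{lem:DkEstimate} controls $\|\Dop(u-u_h)\|$ and Lemma~\ref{lem:SkEstimate} controls $\|\Sopk(u-u_h)\|$. So the proof is essentially a bookkeeping exercise: determine, in each $\kappa$-regime, which of these contributions has the lowest power of $h$, exactly as summarised in Table~\ref{summary}.

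Concretely I would split into the two stated cases. First, for $|\kappa|>\tfrac32$ or $|\kappa|=1$: here Corollary~\ref{cor:H0}\ref{cor:H0c} (or Lemma~\ref{lem:decay-eigenfunctions} when $|\kappa|=1$) gives $u\in H^2(0,\pi)$ with bounded $u''$, so \eqref{eq:IdEstimateLarge} gives $\|u-u_h\|\le h^2\|u''\|$, Lemma~\ref{lem:DkEstimate} with $\alpha=1$ gives $\|\Dop(u-u_h)\|\le h\|u\|_2$, and Lemma~\ref{lem:SkEstimate} (the bound \eqref{eq:SkEstimateLarge} when $|\kappa|\ge2$ or $|\kappa|=1$, the bound \eqref{eq:SkEstimateSmall} when $\tfrac32<|\kappa|<2$, which gives exponent $|\kappa|-\tfrac12>1$) gives $\|\Sopk(u-u_h)\|=O(h^{\min\{3/2,\,|\kappa|-1/2\}})$. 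The dominant term is $O(h)$, coming from $\Dop$, so summing the four pieces and absorbing constants yields \eqref{eq:estimateLargeKappa}. Second, for $|\kappa|\in(\tfrac12,\tfrac32]\setminus\{1\}$ and $r<|\kappa|-\tfrac12$: then $r<1$, and by Corollary~\ref{cor:H0}\ref{cor:H0b} we have $u\in H^{1+r}(0,\pi)$ (note $r<|\kappa|-\tfrac12$ implies $1+r<|\kappa|+\tfrac12$). Apply \eqref{eq:IdEstimateSmall} for $\|u-u_h\|\le h\|u'\|=O(h)$, Lemma~\ref{lem:DkEstimate} with $\alpha=r$ for $\|\Dop(u-u_h)\|\le 2^{1-r}h^r\|u\|_{1+r}$, and the small-$\kappa$ bound \eqref{eq:SkEstimateSmall} for $\|\Sopk(u-u_h)\|\le d_2(u)h^{|\kappa|-1/2}$. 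Since $r<|\kappa|-\tfrac12<1$, the slowest-decaying term is the $\Dop$-term, of order $h^r$, giving \eqref{eq:estimateSmallKappa}.

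The one point requiring a little care — and the only place where the argument is not purely mechanical — is checking that in the small-$\kappa$ regime the Sobolev exponent needed by Lemma~\ref{lem:DkEstimate} is actually available, i.e. that $1+r<|\kappa|+\tfrac12$ so that Corollary~\ref{cor:H0}\ref{cor:H0b} applies; this is exactly the hypothesis $r<|\kappa|-\tfrac12$. I would also note explicitly that $b(u)$ and $\max|q|$, hence $d_1(u),d_2(u)$, and $\|u''\|$ are all finite for a fixed eigenfunction by Lemma~\ref{lem:decay-eigenfunctions}, so the constant $c$ depends on $u$ (equivalently on $\lambda$ and $\kappa$) but not on $h$; the restriction $n\ge4$, i.e. $0<h<1$, is what lets us bound, say, $h^{|\kappa|-1/2}\le h^{r}$ and $h^{3/2}\le h$ when collecting terms. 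Assembling: in each regime, $\|u-u_h\|+\|\mAk(u-u_h)\|\le \|u-u_h\|+\|\Dop(u-u_h)\|+\|\Sopk(u-u_h)\|+\|\pot\|_\infty\|u-u_h\|$, and each summand is $O(h)$ or $O(h^r)$ as appropriate, which proves both \eqref{eq:estimateLargeKappa} and \eqref{eq:estimateSmallKappa}. I do not anticipate any real obstacle; the substance of the work is entirely contained in Lemmas~\ref{lem:DkEstimate} and~\ref{lem:SkEstimate} and in Corollary~\ref{cor:H0}.
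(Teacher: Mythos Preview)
Your proposal is correct and follows essentially the same approach as the paper: decompose via \eqref{eq:sumrepresentation}, bound the $\pot$-term by $\|\pot\|\,\|u-u_h\|$, invoke Corollary~\ref{cor:H0} for the needed regularity, and then apply \eqref{eq:IdEstimateSmall}, \eqref{eq:IdEstimateLarge}, Lemma~\ref{lem:DkEstimate} and Lemma~\ref{lem:SkEstimate}, identifying the dominant power of $h$ in each regime exactly as in Table~\ref{summary}. Your write-up is in fact more detailed than the paper's own proof, which simply cites these ingredients and the table.
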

\begin{proof}
   Using \eqref{eq:sumrepresentation}, we obtain
   \begin{align*}
      \| u - u_h \| + \| \mAk u - \mAk u_h \| 
      \le
      (1+\|\pot\|) \| u - u_h \| + 
      \| \Dop(u - u_h) \| + 
      \| \Sopk(u - u_h) \|.
   \end{align*}

   Part~\ref{cor:H0b} of Corollary~\ref{cor:H0} ensures that
   $u\in H^2(0,\pi)$ if $|\kappa| > \frac32$, and
   $u\in H^r(0,\pi)$ for any $r< |\kappa| + \frac12$ if $\frac12 < |\kappa| \le \frac32$.
   The statements \eqref{eq:estimateLargeKappa} and \eqref{eq:estimateSmallKappa} follow from 
   \eqref{eq:IdEstimateSmall}, \eqref{eq:IdEstimateLarge},
   Lemma~\ref{lem:DkEstimate} and Lemma~\ref{lem:SkEstimate}. See Table~\ref{summary}.
\end{proof}


\section{Numerical benchmarks}    \label{platano}

We now determine various numerical approximations of intervals of enclosure for the eigenvalues of the angular Kerr-Newman Dirac  operator~\eqref{frambuesa} by means of suitable combinations of \eqref{guayaba} and \eqref{lulo}. 
In order to  implement \eqref{lulo}, we consider the analytic enclosures derived in \cite{WinklmeierPhD} and \cite{Win08}.
Our purpose here is twofold.
On the one hand we verify the numerical quantities reported in \cite{SFC} and \cite{chakrabarti}.
On the other hand we establish new sharp benchmarks for the eigenvalues of $\mAk$.

Denote  by 
$\lambda_n\equiv \lambda_n(\kappa; am, a\omega)$,
where
\[
   -\infty< \dots <\lambda_{-n}<\ldots<\lambda_{-1}<0\leq
   \lambda_1<\ldots<\lambda_n< \dots < \infty,
\]
the eigenvalues of $\mAk$ for potential given by \eqref{coco}.
Explicit expressions for these eigenvalues are known only if $am=\pm a\omega$.
In this case,
\begin{equation}
   \label{eq:lambdatau}
   \lambda_n(\kappa; am, \pm am)
   = \pm \frac12 + \sign(n)\sqrt{ \left( \lambda_n(\kappa, 0,0) \mp \frac12 \right)^2 \pm 2\kappa am + (am)^2 }
\end{equation}
where
\begin{equation*}
   \lambda_n(\kappa; 0,0) = \sign(n) \Big( |\kappa|-\frac{1}{2} + |n| \Big),
   \qquad
   n\in\Z\setminus\{0\}.
\end{equation*}
See
\cite[Formula (45)]{BSW2005}.
For $am\not =\pm a\omega$, the two canonical references on numerical approximations of $\lambda_n(\kappa; am, a\omega)$ are \cite{SFC} and \cite{chakrabarti}.

Suffern \emph{et al} derived in \cite{SFC}  an asymptotic expansion of the form \[\lambda_n = \sum_{r,s} C^n_{r,s} (m-\omega)^r (m+\omega)^s.\]
The coefficients $C^n_{r,s}$ can be determined from a suitable series expansion of the eigenfunctions in terms of hypergeometric functions.
On the other hand, Chakrabarti  \cite{chakrabarti} wrote the eigenfunctions in terms of spin weighted spherical harmonics  and derived an expression for the squares of the eigenvalues in terms of $a\omega$ and $\omega/m$.
The tables reported in \cite[Tables~1-3]{chakrabarti} include predictions for the values of $\lambda_{-1}^2$ and $\lambda_{-2}^2$ for various ranges of $\kappa$, $a\omega$ and $am$.
It has been shown (\cite[Formula (45) and Remark 2]{BSW2005}) that \cite[Formula~(54)]{chakrabarti} and \eqref{eq:lambdatau} differ in the case $a\omega = am$, and that the correct expression turns out to be the latter.
See tables~\ref{table:chakra3by2p} and \ref{table:chakra3by2m} below.

In both \cite{SFC} and \cite{chakrabarti}, the numerical estimation of $\lambda_n$ is achieved by means of a series expansions in terms of certain expressions involving $a\omega$ and $am$,
so it is to be expected that the approximations in both cases become less accurate as $|a\omega|$ and $|am|$ increase.
However, no explicit error bounds are given in these papers and they seem to be quite difficult to derive.

A computer code written in Comsol LiveLink v4.3b, which we developed in order to produce all the computations reported here, is available in Appendix~\ref{mandarina}.
In all the calculations reported here the relative tolerance of the eigenvalue solver and integrators was set to $10^{-12}$.

\subsection{The paper \upshape{\protect\cite{chakrabarti}} }

Our first experiment consists in assessing the quality of the  numerical approximations reported in \cite[Table~2b]{chakrabarti} for $a\omega \not= am$, by means of a direct application of \eqref{guayaba}.
For this purpose we fix $h=0.001$. 
 
The tables~\ref{table:chakra3by2p} and \ref{table:chakra3by2m} contain computations of $|\lambda_{-1}(\pm 3/2,am,a\omega)|$ for the range
\[
 a\omega\in\{0.1, 0.2, \dots, 1.0\}, \qquad \omega/m  \in \{0,\, 0.1,\, \dots,\, 1.0\}.
\]
On the top of each row we have reproduced the positive square root of the original numbers from \cite[Table~2b]{chakrabarti}.
On the bottom of each row, we show  the corresponding correct eigenvalue enclosures with upper and lower bounds displayed in small font.
These bounds were obtained from \eqref{guayaba}, by computing the conjugate pairs $z,\overline{z}\in \spec_2(\mAk,\mathcal{L}_{0.001})$  near the segment $(-3,3)$.

Only for $a\omega=0.1,\,0.2,\,0.3$ (and the pair $(a\omega,m/\omega)=(0.4,0)$ when $\kappa=-3/2$), the predictions made in \cite{chakrabarti} are inside the certified enclosures.
For $\kappa=3/2$ they are always above the corresponding enclosure and for $\kappa=-3/2$ they are always below it. 
We have highlighted the relative degree of disagreement with the other quantities in different shades of colour.

\subsection{The paper \upshape{\protect\cite{SFC}} and sharp eigenvalue enclosures}
In this next experiment we validate the numbers reported in \cite{SFC} by means of sharpened eigenvalue enclosures determined from \eqref{lulo}.
This requires knowing beforehand some rough information about the position of the eigenvalues and the neighbouring spectrum. 
For this purpose, we have employed a combination of the analytical inclusions found in \cite{WinklmeierPhD} and \cite{Win08}, and numerically calculated inclusions determined from \eqref{guayaba}.
This technique allows reducing by roughly two orders of magnitude the length of the segments of eigenvalue inclusion. 

The columns in Table~\ref{tab:SFC} marked as ``A'' are analytic upper and lower bounds for the eigenvalues calculated following
\cite[Theorem 4.5]{Win08} and 
\cite[Remarks 6.4 and 6.5]{WinklmeierPhD}.
For our choices of the physical parameters, we always find that the upper bound for the $n$th eigenvalue is less than the lower bound for the $(n+1)$th eigenvalue, so each one of these segments contains a single  non-degenerate  eigenvalue of $\mAk$.
The columns marked as ``N'' were determined by fixing $h=0.001$ and applying directly \eqref{guayaba} in a similar fashion as for the previous experiment.
When these are contained in the former, which is not always the case (see the rows corresponding to $\kappa=\pm \frac12$ for $am=0.005$ and $aw=0.015$), it is guaranteed that there is exactly one eigenvalue in each one of these smaller segments.

\begin{rem}\label{sandia}
   The approach employed in \cite{WinklmeierPhD} and \cite{Win08}
   involves a perturbation from the case $am=aw=0$.
   Not surprisingly, for $am=0.005$, $aw=0.015$ and the critical cases $\kappa=\pm \frac12$, where convergence of the numerical method seems to be lost (see Section~\ref{fivepointfour}),
   the analytical bounds are sharper than the numerically computed bounds. 
\end{rem}

From the data reported in Table~\ref{tab:SFC}, we can implement \eqref{lulo} and compute sharper intervals of enclosure for $\lambda_1$ and $\lambda_2$. Note that we always need information on adjacent eigenvalues:
an upper bound for the one below and a lower bound for the one above.
In order for the enclosures on the right side of \eqref{lulo} to be certified, we also need to ensure that the condition on the left hand side there holds true.
For the data reported in Table~\ref{tab:SFC2}, this is always the case. 

In Table~\ref{tab:SFC2} we show improved inclusions, computed both from the analytical bound and from the numerical bound.
Some of these improved inclusions do not differ significantly, even when the quality of one of the \emph{a priori} bounds  from Table~\ref{tab:SFC} appears to be far lower than the other.
See for example the rows corresponding to $|\kappa|\geq \frac32$.
In these cases the factor $|\Im(z)|^{-2}$ turns out to be far smaller than the coefficient corresponding to the distance to the adjacent points in the spectrum.  By contrast, for the case $|\kappa|=\frac12$, a sharp \emph{a priori} localisation of the adjacent eigenvalues (such as  $\kappa=-\frac12$, $am=0.25$ and $a\omega=0.75$) is critical, because $|\Im(z)|$ is not very small.

\subsection{Global behaviour of the eigenvalues in $aw$ and $am$}

Figure~\ref{clementina} shows $\lambda_{\pm 1}(\frac32, am,aw)$ for a square mesh of 100 equally spaced 
$(a\omega,am)\in [-1,1]\times[0,2]$. The surfaces depicted correspond to an average of the upper and lower bounds for $\lambda_{\pm 1}$ computed directly from \eqref{guayaba}, fixing $h=0.1$. They show the local behaviour of the eigenvalues as functions of $am$ and $a\omega$. On top of the surfaces we also depict the curve (in red) corresponding to the known analytical values for $am=\pm a\omega$ from \eqref{eq:lambdatau}.

\subsection{Optimality of the exponent in Theorem~\ref{mamoncillo}} \label{fivepointfour}
We now test optimality of the leading order of convergence $p(\kappa)$ given in Theorem~\ref{mamoncillo}.
See Figure~\ref{lima}.

For this purpose we have computed residuals of the form
\[
         r_{\kappa}(h)= \left|\lambda_1\left(\kappa,\frac14,\frac14\right)-\tilde{\lambda}_h\right|
\] 
for $h\in\{10^{-3},10^{-2.8},\ldots,10^{-2}\}$. Here $\tilde{\lambda}_h$ is the nearest point (conjugate pair) in
$\spec_{2}(\mAk,\mathcal{L}_h)$ to $\lambda_1(\kappa,\frac14,\frac14)$. According to \eqref{eq:lambdatau},
\[
    \lambda_1\left(\kappa,\frac14,\frac14\right)=\frac12 + \sqrt{ |\kappa|^2 + 2\kappa \frac14 + \frac{1}{16} }.
\]
We have then approximated slopes of the lines
\[
     l_{\kappa}(\log(h))=\log(r_{\kappa}(h)).
\]

In Figure~\ref{lima} we have depicted these slopes,
for 49 equally spaced $\kappa\in(\frac12,3)$. Various conclusions about Theorem~\ref{mamoncillo} can be derived from this figure. 
Taking into account Remark~\ref{parchita}  it appears that an optimal version of \eqref{cemeruco} for $|\kappa|\not=1$ is
\[
      |\lambda_h-\lambda|=\mathcal{O}(h^{\min\{1,|\kappa|-\frac12\}}), \qquad h\to 0.
\]
This is in agreement to the conjecture that the term $\epsilon^{1/2}$ in \eqref{cambur} can be improved to $\epsilon^{1}$.
In such a case, the above conjectured exponent appears to be optimal, in the range $|\kappa|\not \in [1,3/2]$. 

\appendix

\section{Proof of Lemma \ref{lem:decay-eigenfunctions}}
\label{app:frobenius}
According to \cite[Theorem 4.1 in Chap. 4]{CL}, the following holds true.

\begin{thm}
   \label{acai}
   Let $z_0\in\C$. Let $V$ be a complex analytic matrix valued function
   in a neighbourhood of $z_0$.
   If $W$ is a constant $2\times 2$ matrix and the eigenvalues of $W$, $\mu$ and $\nu$, are such that $|\mu-\nu|\notin\N$, then the differential equation
   \begin{align*}
      \left( \frac{\rd}{\rd z} + (z-z_0)^{-1} W + V \right) u = 0
   \end{align*}
   has a fundamental system of the form
   \begin{align*}
      U(z) = (z-z_0)^{-W} P(z)
   \end{align*}
   where $P$ is complex analytic in a neighbourhood of $z_0$, $P(z_0)=\id=\begin{pmatrix} 1 &0\\0&1\end{pmatrix}$ and 
      $(z-z_0)^{-W} := \e^{-\ln(z-z_0)W}$ as in \cite[(1.2) in Chap. 4]{CL}.
\end{thm}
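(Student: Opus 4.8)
The statement is the classical Frobenius theorem for first order systems at a regular singular point, and I would argue exactly as in \cite[Chap.~4]{CL}; what follows is the proof underlying that citation. First I would reduce to $z_0=0$ by replacing $z$ with $t=z-z_0$, and put the equation into the standard form $t\,u' = A(t)u$ with $A(t) = -\bigl(W + t\,V(t)\bigr)$ complex analytic near $0$, $A(0)=-W$, and $A(t) = -W - \sum_{k\ge 1}V_{k-1}t^{k}$ where $V(t)=\sum_{m\ge0}V_m t^{m}$. Note that the eigenvalues of $A(0)=-W$ are $-\mu$ and $-\nu$. I would then seek a fundamental matrix in the power series form $U(t)=P(t)\,t^{-W}$, with $P(t)=\sum_{k\ge0}P_k t^{k}$ matrix valued, $P_0=\id$, and $t^{-W}=\e^{-\ln t\,W}$; the form $(z-z_0)^{-W}P(z)$ quoted in the statement is obtained by the same argument applied to the transposed system, and for the use made of it later either version is adequate.

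Next, substituting the ansatz into $t\,U'=A(t)U$, using that $W$ commutes with $t^{-W}$ and that $\tfrac{\rd}{\rd t}t^{-W}=-t^{-1}W\,t^{-W}$, the singular terms cancel and the equation becomes $t\,P' = \bigl(A(t)-A(0)\bigr)P + A(0)P - P\,A(0)$, whose right hand side is analytic in $t$. Matching the coefficient of $t^{k}$ gives, for every $k\ge1$, the Sylvester type equation
\begin{equation}
\label{eq:frob-rec}
k\,P_k + W P_k - P_k W \;=\; -\sum_{l=0}^{k-1}V_{k-1-l}\,P_l .
\end{equation}

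The key point is the solvability of \eqref{eq:frob-rec}. The linear map $\mathcal L_k\colon X\mapsto k\,X + WX - XW$ on $2\times2$ matrices equals $k\,I+\on{ad}_W$, and the spectrum of $\on{ad}_W$ is $\{0,0,\mu-\nu,\nu-\mu\}$ (this remains true when $W$ is a non-diagonalisable Jordan block, in which case $\on{ad}_W$ is nilpotent). Hence the spectrum of $\mathcal L_k$ is $\{k,\,k,\,k+(\mu-\nu),\,k-(\mu-\nu)\}$. Since $|\mu-\nu|\notin\N$ we have $k\pm(\mu-\nu)\neq0$ for every $k\in\N$, and obviously $k\neq0$, so $\mathcal L_k$ is invertible for all $k\ge1$. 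Therefore \eqref{eq:frob-rec} determines $P_k$ uniquely from $P_0=\id,\dots,P_{k-1}$, and the formal series $P$ is well defined. This is the only place where the hypothesis on $\mu,\nu$ enters.

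Finally I would establish convergence of $P(t)=\sum_k P_k t^{k}$ near $0$ by a standard majorant argument: analyticity of $A$ gives $M,\rho>0$ with $\|V_m\|\le M\rho^{-m}$, while $\mathcal L_k^{-1}=\tfrac1k\bigl(I+\tfrac1k\on{ad}_W\bigr)^{-1}$ gives $\|\mathcal L_k^{-1}\|\le (k-\|\on{ad}_W\|)^{-1}\le C/k$ for $k$ large; inserting these bounds into \eqref{eq:frob-rec} and inducting yields a geometric estimate $\|P_k\|\le A\,\sigma^{-k}$ for suitable $A,\sigma>0$. Thus $P$ is complex analytic in a neighbourhood of $0$ with $P(0)=\id$, and $U(t)=P(t)t^{-W}$ solves $t\,U'=A(t)U$ by construction; moreover $\det U(t)=\det P(t)\cdot t^{-\on{tr}W}$ is not identically zero near $0$ because $\det P(0)=1$, so the columns of $U$ are linearly independent solutions, i.e.\ a fundamental system of the required form, and undoing the translation $t=z-z_0$ finishes the argument. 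The hard part is really only the majorant estimate for convergence; everything else is the algebra of \eqref{eq:frob-rec} together with the observation that $|\mu-\nu|\notin\N$ is precisely the condition that keeps every $\mathcal L_k$, $k\ge1$, invertible.
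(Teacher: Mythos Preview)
The paper does not give its own proof of this statement; it merely invokes \cite[Theorem~4.1, Chap.~4]{CL}. Your argument is precisely the Frobenius recursion that citation points to, and the steps---reducing to $z_0=0$, deriving the Sylvester recursion for the $P_k$, checking that $\mathcal L_k = kI+\operatorname{ad}_W$ is invertible exactly when $|\mu-\nu|\notin\N$, and bounding $\|P_k\|$ by a majorant estimate---are all correct.

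One point worth flagging: you prove the form $U(t)=P(t)\,t^{-W}$, with the singular factor on the right, which is indeed what the recursion naturally yields (and is the form in \cite{CL}). The paper states $U(z)=(z-z_0)^{-W}P(z)$ with the factors in the opposite order; this is not the same object unless $W$ commutes with $P$, and the ansatz $t^{-W}P$ leads to $P'=-t^{W}Vt^{-W}P$, where the conjugated coefficient is in general \emph{not} analytic at $0$. So the ordering in the paper's statement is a slip. Your remark that ``either version is adequate for the use made of it later'' is nonetheless right: in the application (Appendix~\ref{app:frobenius}) $W$ is diagonal and only the leading asymptotics of the two columns---one $\sim\theta^{|\kappa|}$, the other $\sim\theta^{-|\kappa|}$---are used, and these are the same for both orderings since $P(0)=\id$. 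Your appeal to the transposed system to justify the reversed order is a bit loose, but the substance of your proof is sound and matches what the citation actually says.
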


\begin{proof}[Proof of Lemma \ref{lem:decay-eigenfunctions}]
   Without loss of generality we may assume that $\kappa\ge 1/2$. The proof of the case $\kappa\le -1/2$ is analogous.

   Firstly suppose that $2\kappa \not\in \N$. 
   Let $\lambda$ be an eigenvalue of $\mAk$ and let $U$ be a fundamental system of 
   \begin{equation}  \label{gooseberry}   
      (\mfAk-\lambda)U = 0.
   \end{equation}   
   Multiplying \eqref{gooseberry} on the left by 
   $\left(\begin{smallmatrix} 0 & -1 \\ 1 & 0 \end{smallmatrix}\right)$ gives
   \begin{equation}
      \label{araza}
      \left[ 
      \frac{\rd}{\rd\theta} 
      + \left( \frac{1}{\theta} + \frac{1}{\pi-\theta} \right)
      \begin{pmatrix} -\kappa & 0 \\ 0 & \kappa \end{pmatrix}
      + \begin{pmatrix} 0 & -1 \\ 1 & 0 \end{pmatrix}(\pot -\lambda)
	 \right] U
	 = 0,
	 \qquad \theta\in(0,\pi).
   \end{equation}
   By Theorem~\ref{acai}, \eqref{araza} has fundamental systems
   \begin{align}
      \begin{aligned}
      U_0(\theta)
      &=
      \begin{pmatrix}
	 \theta^{-\kappa} & 0 \\ 0 & \theta^{\kappa} 
      \end{pmatrix}
      P_0(\theta),
      \qquad
      U_\pi(\theta)
      =
      \begin{pmatrix}
	 (\pi-\theta)^{-\kappa} & 0 \\ 0 & (\pi-\theta)^{\kappa} 
      \end{pmatrix}
      P_{\pi}(\theta)
      \end{aligned}
   \end{align}
   where $P_0$ is analytic in a neighbourhood of $[0,\pi)$, $P_{\pi}$ is analytic in a neighbourhood of $(0,\pi]$ and $P_0(0) = P_{\pi}(\pi) = \id$.

   Let $u$ be an eigenfunction.
   As $u\in\Ltwotwo$, it follows that there are constants $c_{0}$ and $c_{\pi}$ such that
   \begin{equation*}
      u(\theta) = 
      \begin{pmatrix} \theta^{-\kappa} & 0 \\ 0 & \theta^{\kappa} 
      \end{pmatrix}
      P_0(\theta)
      \begin{pmatrix} 0 \\ c_0
      \end{pmatrix}
      =
      \begin{pmatrix} (\pi-\theta)^{-\kappa} & 0 \\ 0 & (\pi-\theta)^{\kappa} 
      \end{pmatrix}
      P_{\pi}(\theta)
      \begin{pmatrix} 0 \\ c_{\pi}
      \end{pmatrix},
      \quad \theta\in(0,\pi).
   \end{equation*}
   This gives \eqref{uva} under the assumption that $2\kappa \not\in \N$.

   Now assume that $2\kappa \in\N$. We follow a recursive argument.
Set
   \begin{equation*}
   W_0(\theta)
   = \begin{pmatrix} a_0(\theta) & b_0(\theta) \\ c_0(\theta) & d_0(\theta) \end{pmatrix}
   = \frac{1}{\pi-\theta} \begin{pmatrix} -\kappa & 0 \\ 0 & \kappa 
   \end{pmatrix}
   + \begin{pmatrix} 0 & -1 \\ 1 & 0 \end{pmatrix}(\pot(\theta)-\lambda).
   \end{equation*}
Then $a_0, b_0, c_0, d_0$ are analytic functions in $[0,\pi)$.
   Let
   \begin{equation}
      S(\theta) = \begin{pmatrix} \theta & 0 \\ 0 & 1 \end{pmatrix}
      \quad \text{ so that } \quad
      S^{-1}(\theta)S'(\theta) = \begin{pmatrix} \frac{1}{\theta} & 0 \\ 0 & 0 \end{pmatrix},
     \qquad\quad
     \theta\in(0,\,\pi).
   \end{equation}
   The equation \eqref{araza} can be transformed into
   \begin{align}
      \nonumber
      0 &=
      S^{-1}\left[
      \frac{\rd}{\rd\theta} 
      + \frac{1}{\theta}
      \begin{pmatrix} -\kappa & 0 \\ 0 & \kappa
      \end{pmatrix}
      + W_0
      \right]SS^{-1}U
      \\
      \nonumber
      &=
      \left[
      \frac{\rd}{\rd\theta} 
      + S^{-1}S'
      + \frac{1}{\theta}
      \begin{pmatrix} -\kappa & 0 \\ 0 & \kappa
      \end{pmatrix}
      + S^{-1}W_0S
      \right]S^{-1}U
      \\
      \nonumber
      &=
      \left[
      \frac{\rd}{\rd\theta} 
      + \frac{1}{\theta}
      \begin{pmatrix} -\kappa+1 & 0 \\ 0 & \kappa
      \end{pmatrix}
      + 
      \begin{pmatrix} a_0 & \theta^{-1}b_0 \\ \theta c_0 & d_0
      \end{pmatrix}
      \right]S^{-1}U
      \\
      \label{zapote}
      &=
      \left[
      \frac{\rd}{\rd\theta} 
      + \frac{1}{\theta}
      \smash{\underbrace{\begin{pmatrix} -\kappa+1 & b_0(0) \\ 0 & \kappa
      \end{pmatrix}}_{W_1}}
      + 
      \begin{pmatrix} a_0 & \beta_0 \\ \theta c_0 & d_0
      \end{pmatrix}
      \right]S^{-1}U
      \vphantom{
      \underbrace{\begin{pmatrix} -\kappa+1 & b_0(0) \\ 0 & \kappa
      \end{pmatrix}}_{W_1}
      }
   \end{align}
   where $\beta_0(\theta) = \theta^{-1}(b_0(\theta)-b_0(0))$ is analytic in a neighbourhood of $[0,\pi)$.
   In order to diagonalise $W_1$, let
      $T_1 = \begin{pmatrix}
      1 & b_0(0) \\ 0 & 2\kappa-1
   \end{pmatrix}$.
   A further transformation of~\eqref{zapote} gives
   \begin{align*}
      0 &=
      T_1^{-1}\left[
      \frac{\rd}{\rd\theta} 
      + \frac{1}{\theta}
      \begin{pmatrix} -\kappa+1 & b_0(0) \\ 0 & \kappa
      \end{pmatrix}
      + 
      \begin{pmatrix} a_0 & \beta_0 \\ \theta c_0 & d_0
      \end{pmatrix}
      \right]T_1T_1^{-1}S^{-1}U
      \\
      &=
      \left[
      \frac{\rd}{\rd\theta} 
      + \frac{1}{\theta}
      \begin{pmatrix} -\kappa+1 & 0 \\ 0 & \kappa
      \end{pmatrix}
      + 
      \begin{pmatrix} a_1 & b_1 \\ c_1 & d_1
      \end{pmatrix}
      \right]T_1^{-1}S^{-1}U
   \end{align*}
   where $a_1, b_1, c_1, d_1$ are analytic.
   By repeating this process $2\kappa-1$ times we get
   \begin{equation}
      \label{ano}
      0 =
      \left[
      \frac{\rd}{\rd\theta} 
      + \frac{1}{\theta}
      \begin{pmatrix} \kappa-1 & 0 \\ 0 & \kappa
      \end{pmatrix}
      + 
      \begin{pmatrix} a_{2\kappa-1} & b_{2\kappa-1} \\ c_{2\kappa-1} & d_{2\kappa-1}
      \end{pmatrix}
      \right]
      T_{2\kappa-1}^{-1}S^{-1}
      \dots
      T_1^{-1}S^{-1}
      U
   \end{equation}
   where $a_{2\kappa-1}, b_{2\kappa-1}, c_{2\kappa-1}, d_{2\kappa-1}$ are analytic in $[0,\pi)$ and
   \begin{align*}
      T_j = \begin{pmatrix} 1 & b_{j-1}(0) \\ 0 & 2\kappa - j
      \end{pmatrix}.
   \end{align*}
   A final transformation of \eqref{ano} with $S$ yields
   \begin{equation*}
      0 =
      \left[
      \frac{\rd}{\rd\theta} 
      + \frac{1}{\theta}
      \smash{\underbrace{
      \begin{pmatrix} \kappa & b_{2\kappa-1}(0) \\ 0 & \kappa
      \end{pmatrix}}_{ W_{2\kappa}} }
      + 
      \begin{pmatrix} a_{2\kappa-1} & \beta_{2\kappa-1} \\ \theta c_{2\kappa-1} & d_{2\kappa-1}
      \end{pmatrix}
      \right]
      S^{-1}T_{2\kappa-1}^{-1}S^{-1}
      \dots
      T_1^{-1}S^{-1}
      U.
      \vphantom{\underbrace{
      \begin{pmatrix} \kappa & b_{2\kappa-1}(0) \\ 0 & \kappa
      \end{pmatrix}}_{ W_{2\kappa}} }
   \end{equation*}
   
   The eigenvalues of $W_{2\kappa}$ do not differ by a positive integer, therefore the differential equation
      $\displaystyle\left[
      \frac{\rd}{\rd\theta} 
      + \frac{1}{\theta}
      \begin{pmatrix} \kappa & b_{2\kappa-1}(0) \\ 0 & \kappa
      \end{pmatrix}
      + 
      \begin{pmatrix} a_{2\kappa-1} & \beta_{2\kappa-1} \\ \theta c_{2\kappa-1} & d_{2\kappa-1}
      \end{pmatrix}
      \right] Y = 0$
      has a fundamental system of the form 
      $Y(\theta) = \theta^{-\kappa} P_0(\theta)$ for $P_0$ analytic in $[0,\pi)$ and $P_0(0)=\id$.
      Hence a fundamental system of \eqref{araza} is given by
      \begin{align}
	 \nonumber
	 U_0(\theta)
	 &=  S T_1 S T_2 \dots S T_{2\kappa-1} S Y
	 \\
	 \nonumber
	 &=  
	 \theta^{-\kappa}
	 \begin{pmatrix} \theta & 0 \\ 0 & 1
	 \end{pmatrix}
	 \begin{pmatrix} 1 & b_{0}(0) \\ 0 & 2\kappa -1
	 \end{pmatrix}
	 \begin{pmatrix} \theta & 0 \\ 0 & 1
	 \end{pmatrix}
	 \cdots
	 \begin{pmatrix} 1 & b_{2\kappa-2}(0) \\ 0 & 1
	 \end{pmatrix}
	 \begin{pmatrix} \theta & 0 \\ 0 & 1
	 \end{pmatrix}
	 P_0(\theta)
	 \\
	 \label{nispero0}
	 &=
	 \theta^{-\kappa}
	 \begin{pmatrix} \theta^{2\kappa} & p_0(\theta) \\ 0 & (2\kappa-1)!
	 \end{pmatrix}
	 P_0(\theta)
	 =
	 \begin{pmatrix} \theta^{\kappa} & \theta^{-\kappa}p_0(\theta) \\ 0 & \theta^{-\kappa}(2\kappa-1)!
	 \end{pmatrix}
	 P_0(\theta),
      \end{align}
      where $p_0$ is a polynomial of degree $\le 2\kappa -1$.

Now, we can repeat a similar argument at $\theta=\pi$ instead, and find another fundamental system of \eqref{araza} for the segment $(0,\pi]$ of the form
\[
  U_{\pi}(\theta)=
	 \begin{pmatrix} (\pi-\theta)^{-\kappa}(2\kappa-1)! & 0 \\ (\pi-\theta)^{-\kappa}p_{\pi}(\theta) & (\pi-\theta)^{\kappa}
	 \end{pmatrix}
	  P_{\pi}(\theta),
  \]
 where $p_{\pi}$ is a suitable polynomial in $(\pi-\theta)$ of degree $\le 2\kappa-1$ and $P_{\pi}$  is analytic in a neighbourhood of $(0,\pi]$.
      If $u$ is an eigenfunction of $\mAk$, then there are
      constants $c_1, c_2,  d_1$ and $d_2$ such that
      \begin{equation*}
	 u = U_0 \begin{pmatrix} c_1 \\ c_2
	 \end{pmatrix}
	 = U_{\pi} \begin{pmatrix} d_1 \\ d_2
	 \end{pmatrix}.
      \end{equation*}
      By \eqref{nispero0}, and the analogous equation at $\pi$, it follows that, for $u$ to be square integrable, it is necessary that $c_2=d_1=0$.
\end{proof}

\begin{rem} \label{grapefruit}
   The proof shows that all eigenvalues of $\mAk$ are simple.
   Any other solution of $(\mAk-\lambda)u=0$ would diverge of order $-|\kappa|$ for $\theta\to 0$ or $\theta\to\pi$.
\end{rem}

\section{Computer code}   \label{mandarina} 
Complete Comsol LiveLink v4.3b code for computing $\spec_2(\mAk,\mathcal{L}_h)$. See \cite{Comsol}.

\begin{verbatim}
% BASIC_KND_EIGS    Computes conjugate pairs in the second order spectra 
%                   of the angular Kerr-Newman Dirac operator 
%                   for trial spaces made of continuous affine functions
%
% BASIC_KND_EIGS(AM,AW,KAPPA,H,NEVP,SH,RTL)
%    AM    = mass term
%    AW    = energy term
%    KAPPA = angular momentum around axis of symmetry
%    H     = element size
%    NEVP  = number of conjugate pairs
%    SH    = shift
%    RTL   = relative tolerance
%
% Example:
%      z=basic_KND_eigs(0.25,0.75,2.5,0.1,8,0,1E-12)

function z=basic_KND_eigs(am,aw,kappa,h,nevp,sh,rtl)

import com.comsol.model.*
import com.comsol.model.util.*

model = ModelUtil.create('Model');
geom1=model.geom.create('geom1', 1);
mesh1=model.mesh.create('mesh1', 'geom1');

i1=geom1.feature.create('i1', 'Interval');
i1.set('intervals', 'one');
i1.set('p1', '0');
i1.set('p2', 'pi');
geom1.run;

mesh1.automatic(false);
mesh1.feature('size').set('custom', 'on');
mesh1.feature('size').set('hmax', num2str(h));
mesh1.run;

model.param.set('am',num2str(am));
model.param.set('aw',num2str(aw));
model.param.set('kappa',num2str(kappa));
model.param.set('C','am*cos(x)');
model.param.set('S','(kappa/sin(x)+aw*sin(x))');

w=model.physics.create('w', 'WeakFormPDE', 'geom1', {'u1' 'u2'});
w.prop('ShapeProperty').set('shapeFunctionType', 'shlag');
w.prop('ShapeProperty').set('order', 1);
w.feature('wfeq1').set('weak', 1,...
   '(-C*u1+u2x+S*u2)*test(-C*u1+u2x+S*u2)-2*u1t*test(-C*u1+u2x+S*u2)+u1t*test(u1t)');
w.feature('wfeq1').set('weak', 2,...
   '(-u1x+S*u1+C*u2)*test(-u1x+S*u1+C*u2)-2*u2t*test(-u1x+S*u1+C*u2)+u2t*test(u2t)');

cons1=w.feature.create('cons1', 'Constraint',0);
cons1.selection.set([1 2]);
cons1.set('R',1, 'u1^2');
cons1.set('R',2, 'u2^2');

std1=model.study.create('std1');
std1.feature.create('eigv', 'Eigenvalue');
std1.feature('eigv').activate('w', true);

sol1=model.sol.create('sol1');
sol1.study('std1');
sol1.feature.create('st1', 'StudyStep');
sol1.feature('st1').set('study', 'std1');
sol1.feature('st1').set('studystep', 'eigv');
sol1.feature.create('v1', 'Variables');
sol1.feature.create('e1', 'Eigenvalue');
sol1.feature('e1').set('control', 'eigv');
sol1.feature('e1').set('shift', num2str(sh));
sol1.feature('e1').set('neigs', nevp);
sol1.feature('e1').set('rtol', rtl);
sol1.attach('std1');
sol1.runAll;

info= mphsolinfo(model,'soltag','sol1');
z=info.solvals;
\end{verbatim}

\section*{Acknowledgements}
The authors wish to express their gratitude to P.~Chig\"uiro 
for insightful comments during the preparation of this manuscript.
This research was initiated as part of the programme Spectral Theory of Relativistic Operators
held at the Isaac Newton Institute for Mathematical Sciences in July 2012. 
Subsequent funding was provided by the British Engineering and Physical Sciences Research 
Council  grant EP/I00761X/1, 
FAPA research grant No. PI160322022 of the Facultad de Ciencias de la Universidad de los Andes,
the Carnegie Trust for the Universities of Scotland 
reference 31741, the London Mathematical Society reference 41347 and 
the Edinburgh Mathematical Society Research Support Fund.

\bibliographystyle{alpha}
\bibliography{../Kerr-Newman-estimates}

\begin{thebibliography}{FKSY00b}

\bibitem[BB09]{Boulton:2009p2970}
L.~Boulton and N.~Boussa{\"\i}d.
\newblock Non-variational computation of the eigenstates of {D}irac operators
  with radially symmetric potentials.
\newblock {\em LMS J. Comput. Math}, pages 1--30, 2009.

\bibitem[BH15]{2014Boulton}
L.~Boulton and A.~Hobiny.
\newblock On the convergence of the quadratic method.
\newblock {\em IMA J. Numer. Anal.}, to appear 2015.

\bibitem[BL07]{2006Boulton}
L.~Boulton and M.~Levitin.
\newblock On approximation of the eigenvalues of perturbed periodic
  {S}chr\"odinger operators.
\newblock {\em J. Phys. A}, 40(31):9319--9329, 2007.

\bibitem[Bou06]{MR2219033}
L.~Boulton.
\newblock Limiting set of second order spectra.
\newblock {\em Math. Comp.}, 75(255):1367--1382, 2006.

\bibitem[Bou07]{MR2289273}
L.~Boulton.
\newblock Non-variational approximation of discrete eigenvalues of self-adjoint
  operators.
\newblock {\em IMA J. Numer. Anal.}, 27(1):102--121, 2007.

\bibitem[BS06]{BaticSchmid2006}
D.~Batic and H.~Schmid.
\newblock The {D}irac propagator in the {K}err-{N}ewman metric.
\newblock {\em Progr. Theoret. Phys.}, 116(3):517--544, 2006.

\bibitem[BS11]{2010Boulton}
L.~Boulton and M.~Strauss.
\newblock On the convergence of second-order spectra and multiplicity.
\newblock {\em Proc. R. Soc. Lond. Ser. A Math. Phys. Eng. Sci.},
  467(2125):264--284, 2011.

\bibitem[BS12]{MR2995208}
L.~Boulton and M.~Strauss.
\newblock Eigenvalue enclosures and convergence for the linearized {MHD}
  operator.
\newblock {\em BIT}, 52(4):801--825, 2012.

\bibitem[BSW05]{BSW2005}
D.~Batic, H.~Schmid, and M.~Winklmeier.
\newblock On the eigenvalues of the {C}handrasekhar-{P}age angular equation.
\newblock {\em J. Math. Phys.}, 46(1):012504--35, 2005.

\bibitem[Cha84]{chakrabarti}
S.~K. Chakrabarti.
\newblock On mass-dependent spheroidal harmonics of spin one-half.
\newblock {\em Proc. Roy. Soc. London, Ser. A}, 391(1800):27--38, 1984.

\bibitem[Cha98]{chandrasekhar}
S.~Chandrasekhar.
\newblock {\em The mathematical theory of black holes}.
\newblock Oxford Classic Texts in the Physical Sciences. The Clarendon Press
  Oxford University Press, New York, 1998.
\newblock Reprint of the 1992 edition.

\bibitem[CL55]{CL}
E.~A. Coddington and N.~Levinson.
\newblock {\em Theory of ordinary differential equations}.
\newblock McGraw-Hill Book Company, Inc., New York-Toronto-London, 1955.

\bibitem[Com14]{Comsol}
Comsol.
\newblock {\em {L}ive{L}ink for {M}atlab v5.0 user's guide}.
\newblock Comsol, 2014.

\bibitem[Dav95]{1995Davies}
E.~B. Davies.
\newblock {\em Spectral theory and differential operators}.
\newblock Cambridge University Press, Cambridge, 1995.

\bibitem[Dav98]{1998Davies}
E.~B. Davies.
\newblock Spectral enclosures and complex resonances for general self-adjoint
  operators.
\newblock {\em LMS J. Comput. Math.}, 1:42--74, 1998.

\bibitem[DES00]{EstebanSere}
J.~Dolbeault, M.~J. Esteban, and E.~S{\'e}r{\'e}.
\newblock Variational characterization for eigenvalues of {D}irac operators.
\newblock {\em Calc. Var. Partial Differential Equations}, 10(4):321--347,
  2000.

\bibitem[DP04]{2004Davies}
E.~B. Davies and M.~Plum.
\newblock Spectral pollution.
\newblock {\em IMA J. Numer. Anal.}, 24(3):417--438, 2004.

\bibitem[EG04]{ErnGuermond}
A.~Ern and J.-L. Guermond.
\newblock {\em Theory and practice of finite elements}, volume 159 of {\em
  Applied Mathematical Sciences}.
\newblock Springer-Verlag, New York, 2004.

\bibitem[FKSY00a]{FKSYErr2000}
F.~Finster, N.~Kamran, J.~Smoller, and S.-T. Yau.
\newblock Erratum: ``{N}onexistence of time-periodic solutions of the {D}irac
  equation in an axisymmetric black hole geometry''.
\newblock {\em Comm. Pure Appl. Math.}, 53(9):1201, 2000.

\bibitem[FKSY00b]{FKSY2000}
F.~Finster, N.~Kamran, J.~Smoller, and S.-T. Yau.
\newblock Nonexistence of time-periodic solutions of the {D}irac equation in an
  axisymmetric black hole geometry.
\newblock {\em Comm. Pure Appl. Math.}, 53(7):902--929, 2000.

\bibitem[GLS99]{Siedentop}
M.~Griesemer, R.~T. Lewis, and H.~Siedentop.
\newblock A minimax principle for eigenvalues in spectral gaps: {D}irac
  operators with {C}oulomb potentials.
\newblock {\em Doc. Math.}, 4:275--283 (electronic), 1999.

\bibitem[Hob14]{HobinyThesis}
A.~Hobiny.
\newblock {\em Enclosures for the eigenvalues of self-adjoint operators and
  applications to Schr{\"o}dinger operators}.
\newblock PhD Thesis, Heriot-Watt University, 2014.

\bibitem[KLT04]{KLT2004}
M.~Kraus, M.~Langer, and C.~Tretter.
\newblock Variational principles and eigenvalue estimates for unbounded block
  operator matrices and applications.
\newblock {\em J. Comput. Appl. Math.}, 171(1-2):311--334, 2004.

\bibitem[LLT02]{LLT2002}
H.~Langer, M.~Langer, and C.~Tretter.
\newblock Variational principles for eigenvalues of block operator matrices.
\newblock {\em Indiana Univ. Math. J.}, 51(6):1427--1459, 2002.

\bibitem[LS04]{2004Levitin}
M.~Levitin and E.~Shargorodsky.
\newblock Spectral pollution and second-order relative spectra for self-adjoint
  operators.
\newblock {\em IMA J. Numer. Anal.}, 24(3):393--416, 2004.

\bibitem[LT06]{LangerMTretter2006}
M.~Langer and C.~Tretter.
\newblock Variational principles for eigenvalues of the {K}lein-{G}ordon
  equation.
\newblock {\em J. Math. Phys.}, 47(10):103506, 18, 2006.

\bibitem[RS80]{ReedSimonII1980}
M.~Reed and B.~Simon.
\newblock {\em Methods of Modern Mathematical Physics}, volume~II.
\newblock Academic Press, San Diego, 1980.

\bibitem[Sch04]{Schmid}
H.~Schmid.
\newblock Bound state solutions of the {D}irac equation in the extreme {K}err
  geometry.
\newblock {\em Math. Nachr.}, 274/275:117--129, 2004.

\bibitem[SFC83]{SFC}
K.~G. Suffern, E.~D. Fackerell, and C.~M. Cosgrove.
\newblock Eigenvalues of the {C}handrasekhar-{P}age angular functions.
\newblock {\em J. Math. Phys.}, 24(5):1350--1358, 1983.

\bibitem[Sha00]{2000Shargorodsky}
E.~Shargorodsky.
\newblock Geometry of higher order relative spectra and projection methods.
\newblock {\em J. Operator Theory}, 44(1):43--62, 2000.

\bibitem[Str11]{2010Strauss}
M.~Strauss.
\newblock Quadratic projection methods for approximating the spectrum of
  self-adjoint operators.
\newblock {\em IMA J. Numer. Anal.}, 31(1):40--60, 2011.

\bibitem[Tre08]{Tretterbook}
C.~Tretter.
\newblock {\em Spectral theory of block operator matrices and applications}.
\newblock Imperial College Press, London, 2008.

\bibitem[Wei87]{Weidmann}
J.~Weidmann.
\newblock {\em Spectral theory of ordinary differential operators}, volume 1258
  of {\em Lecture Notes in Mathematics}.
\newblock Springer-Verlag, Berlin, 1987.

\bibitem[Win05]{WinklmeierPhD}
M.~Winklmeier.
\newblock {\em The angular part of the Dirac equation in the Kerr-Newman
  metric: Estimates for the eigenvalues}.
\newblock PhD thesis, Universit{\"a}t Bremen, 2005.

\bibitem[Win08]{Win08}
M.~Winklmeier.
\newblock A variational principle for block operator matrices and its
  application to the angular part of the {D}irac operator in curved spacetime.
\newblock {\em J. Differential Equations}, 245(8):2145--2175, 2008.

\bibitem[WY06]{WY06}
M.~Winklmeier and O.~Yamada.
\newblock Spectral analysis of radial {D}irac operators in the {K}err-{N}ewman
  metric and its applications to time-periodic solutions.
\newblock {\em J. Math. Phys.}, 47(10):102503, 17, 2006.

\bibitem[WY09]{WY09}
M.~Winklmeier and O.~Yamada.
\newblock A spectral approach to the {D}irac equation in the non-extreme
  {K}err-{N}ewmann metric.
\newblock {\em J. Phys. A}, 42(29):295204, 15, 2009.

\bibitem[ZM95]{1995Zimmermann}
S.~Zimmermann and U.~Mertins.
\newblock Variational bounds to eigenvalues of self-adjoint eigenvalue problems
  with arbitrary spectrum.
\newblock {\em Z. Anal. Anwendungen}, 14(2):327--345, 1995.

\end{thebibliography}

\clearpage

{\footnotesize
\noindent{\scshape Lyonell Boulton\\
Department of Mathematics \& Maxwell Institute for the Mathematical Sciences\\
Heriot-Watt University\\
Edinburgh, EH14 4AS, United Kingdom}\\
{\itshape E-mail address}: \href{l.boulton@hw.ac.uk}{l.boulton@hw.ac.uk}\\
{\itshape URL}: \url{http://www.ma.hw.ac.uk/~lyonell/}

\bigskip
\bigskip

\noindent{\scshape Monika Winklmeier\newline
Departamento de Matem{\'a}ticas\\
Universidad de Los Andes\\
Cra. 1a No 18A-70, Bogot{\'a}, Colombia}\\
{\itshape E-mail address}: \href{mailto:mwinklme@uniandes.edu.co}{mwinklme@uniandes.edu.co}\\
{\itshape URL}: \url{http://matematicas.uniandes.edu.co/~mwinklme/}
}
\clearpage

\newcommand{\cc}[1]{$#1$}

\definecolor{rlightblue}{rgb}{1.0, 0.90, 0.81}
\definecolor{rcolumbiablue}{rgb}{1.0, 0.87, 0.61}
\definecolor{rbabyblueeyes}{rgb}{.95,.79,.63}
\definecolor{rdarkpastelblue}{rgb}{0.8, 0.62, 0.47}

\definecolor{lightblue}{rgb}{0.81, 0.90, 1.0}
\definecolor{columbiablue}{rgb}{0.61, 0.87, 1.0}
\definecolor{babyblueeyes}{rgb}{.63, .79, .95}
\definecolor{darkpastelblue}{rgb}{0.47, 0.62, 0.8}

\newcommand{\tl}{\cellcolor{rlightblue}}       
\newcommand{\ttl}{\cellcolor{rcolumbiablue}}          
\newcommand{\tttl}{\cellcolor{rbabyblueeyes}} 
\newcommand{\ttttl}{\cellcolor{rdarkpastelblue}}            

\newcommand{\ti}{\cellcolor{lightblue}}         
\newcommand{\tii}{\cellcolor{columbiablue}}     
\newcommand{\tiii}{\cellcolor{babyblueeyes}}    
\newcommand{\tiiii}{\cellcolor{darkpastelblue}} 

\newcommand{\ra}[1]{\multirow{2}{*}{$m/\omega =#1$} }

\begin{table}[H]
   \footnotesize
   \renewcommand{\ra}[1]{\multirow{2}{*}{$#1$} }
   \begin{tabular}{l||l|l|l|l|l|l}
      $a\omega$ & $m/\omega = 0$ & $m/\omega = 0.2$ & $m/\omega = 0.4$ & $m/\omega = 0.6$ & $m/\omega = 0.8$ & $m/\omega = 1.0$ \\[1ex] \hline\hline
      \ra{0.1}\rule{0cm}{4ex}
      &    $2.080309          $ &    $2.076445           $ &    $2.072607          $ &    $2.068795          $ &    $2.065008          $ &    $2.061246          $\\
      &    $2.080^{500}_{123} $ &    $2.076^{638}_{260}  $ &    $2.072^{800}_{423} $ &    $2.068^{988}_{610} $ &    $2.06^{5200}_{4823}$ &    $2.061^{438}_{061} $\\[1ex]
      \ra{0.2}
      &    $2.161189          $ &    $2.153720           $ &    $2.146351          $ &    $2.139083          $ &    $2.131917          $ &    $2.124853          $\\
      &    $2.161^{402}_{027} $ &    $2.153^{939}_{563}  $ &    $2.146^{573}_{199} $ &    $2.13^{9305}_{8933}$ &    $2.13^{2137}_{1764}$ &    $2.12^{5067}_{4694}$\\[1ex]
      \ra{0.3}
      &    $2.242573          $ &    $2.231734           $ &    $2.221119          $ &    $2.210730          $ &    $2.200569          $ &    $2.190635          $\\
      &    $2.242^{851}_{476} $ &    $2.23^{2029}_{1655} $ &    $2.221^{425}_{049} $ &    $2.21^{1035}_{0663}$ &    $2.200^{863}_{494} $ &    $2.190^{910}_{540} $\\[1ex]
      \ra{0.4}
      &\ttl$2.324\cc{395}     $ & \tl$2.3104\cc{02}      $ & \tl$2.2968\cc{06}     $ &\ttl$2.283\cc{610}     $ & \tl$2.2708\cc{15}     $ & \tl$2.2584\cc{19}     $\\
      &    $2.324^{801}_{427} $ &    $2.310^{849}_{472}  $ &    $2.29^{7271}_{6899}$ &    $2.28^{4073}_{3702}$ &    $2.27^{1254}_{0880}$ &    $2.258^{810}_{436} $\\[1ex]
      \ra{0.5}
      &\ttl$2.406\cc{589}     $ &\ttl$2.389\cc{642}      $ &\ttl$2.373\cc{312}     $ &\ttl$2.357\cc{605}     $ &\ttl$2.342\cc{520}     $ &\ttl$2.328\cc{049}     $\\
      &    $2.40^{7214}_{6840}$ &    $2.3^{90339}_{89965}$ &    $2.37^{4040}_{3672}$ &    $2.35^{8325}_{7953}$ &    $2.34^{3184}_{2810}$ &    $2.328^{615}_{239} $\\[1ex]
      \ra{0.6}
      &  \ttl$2.489\cc{091}      $ &\ttttl$2.4\cc{69373}      $ &\tttl$2.45\cc{0543}      $ &\tttl$2.43\cc{2607}      $ & \tttl$2.41\cc{5559}      $ &  \ttl$2.399\cc{378}       $\\
      &      $2.4^{90049}_{89676}$ &      $2.470^{448}_{076}  $ &     $2.451^{665}_{295}  $ &     $2.433^{700}_{328}  $ &      $2.416^{545}_{169}  $ &      $2.^{400186}_{399814}$\\[1ex]
      \ra{0.7}
      & \tttl$2.57\cc{1837}      $ &\ttttl$2.5\cc{49516}      $ & \ttl$2.52\cc{8407}      $ & \ttl$2.50\cc{8514}      $ &\ttttl$2.4\cc{89817}      $ & \tttl$2.47\cc{2274}       $\\
      &      $2.57^{3273}_{2897} $ &      $2.55^{1127}_{0758} $ &     $2.5^{30079}_{29708}$ &     $2.5^{10118}_{09745}$ &      $2.49^{1231}_{0858} $ &      $2.473^{400}_{027}   $\\[1ex]
      \ra{0.8}
      & \tttl$2.65\cc{4763}      $ &\ttttl$2.6\cc{29996}      $ &\tttl$2.60\cc{6820}      $ &\tttl$2.58\cc{5231}      $ & \tttl$2.56\cc{5189}      $ & \tttl$2.54\cc{6616}       $\\
      &      $2.656^{846}_{473}  $ &      $2.63^{2332}_{1961} $ &     $2.60^{9221}_{8852} $ &     $2.587^{503}_{129}  $ &      $2.56^{7151}_{6778} $ &      $2.54^{8137}_{7763}  $\\[1ex]
      \ra{0.9}
      &\ttttl$2.7\cc{37803}      $ & \tttl$2.71\cc{0737}      $ &\tttl$2.68\cc{5697}      $ &\tttl$2.66\cc{2669}      $ & \tttl$2.64\cc{1580}      $ & \tttl$2.62\cc{2294}       $\\
      &      $2.740^{741}_{368} $ &      $2.71^{4015}_{3646} $ &     $2.68^{9038}_{8666} $ &     $2.665^{783}_{411}  $ &      $2.64^{4218}_{3841} $ &      $2.62^{4288}_{3912}  $\\[1ex]
      \ra{1.0}
      & \tttl$2.82\cc{0892}      $ &  \tttl$2.79\cc{1662}     $ & \tttl$2.76\cc{4958}     $ & \tttl$2.74\cc{0745}     $ & \ttttl$2.7\cc{18899}     $ &\ttttl$2.\cc{699206}       $\\
      &      $2.824^{924}_{551}  $ &       $2.79^{6140}_{5767}$ &      $2.769^{476}_{101} $ &      $2.744^{894}_{523} $ &       $2.72^{2345}_{1971}$ &      $2.701^{750}_{374}   $
   \end{tabular}
   \bigskip
   \bigskip
\caption{\label{table:chakra3by2p}
Computation of $|\lambda_{-1}(3/2,am,a\omega)|$ for different $a\omega$ and $\omega/m$, as shown.
The quantities in the upper part of each row are the positive square root of those in
\protect\cite[Table 2b]{chakrabarti}.
The quantities in the lower part of each row are the enclosures determined directly from an application of \eqref{guayaba}.
Quantities on the upper rows which are not within our guaranteed error bounds are shaded.
}

\end{table}

\pagebreak

\begin{table}[H]
\footnotesize
\renewcommand{\ra}[1]{\multirow{2}{*}{$#1$} }
\begin{tabular}{l||l|l|l|l|l|l}
   $a\omega$ & $m/\omega = 0$ & $m/\omega = 0.2$ & $m/\omega = 0.4$ & $m/\omega = 0.6$ & $m/\omega = 0.8$ & $m/\omega = 1.0$ \\[1ex] \hline\hline
   \ra{0.1}\rule{0cm}{4ex}
   &    $1.920331         $  &    $1.924477           $ &    $1.928648           $ &    $1.932845          $ &    $1.937067          $ &     $1.941315         $\\
   &    $1.920^{516}_{141}$  &    $1.924^{659}_{287}  $ &    $1.928^{830}_{457}  $ &    $1.93^{3027}_{2653}$ &    $1.93^{7249}_{6876}$ &     $1.941^{497}_{125}$\\[1ex]
   \ra{0.2}
   &    $1.841373         $  &    $1.849972           $ &    $1.858676           $ &    $1.867484          $ &    $1.876395          $ &     $1.885406         $\\
   &    $1.841^{536}_{163}$  &    $1.8^{50129}_{49755}$ &    $1.858^{828}_{454}  $ &    $1.867^{632}_{259} $ &    $1.876^{543}_{170} $ &     $1.885^{559}_{185}$\\[1ex]
   \ra{0.3}
   &    $1.763193          $ &    $1.776584           $ &    $1.790217           $ &    $1.804084          $ &    $1.818181          $ &     $1.832498         $\\
   &    $1.76^{3306}_{2931}$ &    $1.776^{673}_{299}  $ &    $1.7^{90285}_{89910}$ &    $1.80^{4140}_{3765}$ &    $1.81^{8236}_{7859}$ &     $1.832^{569}_{192}$\\[1ex]
   \ra{0.4}
   &    $1.685863          $ &\tii$1.704\cc{417}      $ &\tii$1.723\cc{408}      $ &\tii$1.742\cc{821}     $ &\tii$1.762\cc{635}     $ & \tii$1.782\cc{831}    $\\
   &    $1.685^{883}_{507} $ &    $1.704^{377}_{002}  $ &    $1.72^{3316}_{2939} $ &    $1.742^{693}_{314} $ &    $1.762^{500}_{122} $ &     $1.782^{732}_{353}$\\[1ex]
   \ra{0.5}
   &\tii$1.609\cc{449}       $ &\tii$1.633\cc{573}      $ &\tii $1.658\cc{395}     $ &\tii$1.683\cc{877}       $ &  \tii$1.709\cc{979}     $ &  \tii$1.736\cc{651}     $\\
   &    $1.60^{9331}_{8955}  $ &    $1.63^{3333}_{2955} $ &     $1.65^{8041}_{7660}$ &    $1.683^{441}_{060}   $ &      $1.709^{519}_{138} $ &      $1.73^{6258}_{5878}$\\[1ex]
   \ra{0.6}
   & \tiii$1.53\cc{4014}     $ & \tiii$1.56\cc{4156}    $ &\tiii$1.59\cc{5324}     $ & \tiii$1.62\cc{7446}     $ &\tiiii$1.6\cc{60439}     $ & \tiii$1.69\cc{4209}     $\\
   &      $1.533^{718}_{340} $ &      $1.563^{634}_{255}$ &     $1.594^{586}_{204} $ &      $1.626^{545}_{163} $ &      $1.659^{483}_{099} $ &      $1.69^{3364}_{2979}$\\[1ex]
   \ra{0.7}
   &  \tii$1.459\cc{615}     $ & \tiii$1.49\cc{6266}    $ &\tiii$1.53\cc{4344}     $ & \tiii$1.57\cc{3721}     $ & \tiii$1.61\cc{4248}     $ & \tiii$1.65\cc{5756}     $\\
   &      $1.45^{9117}_{8734}$ &      $1.495^{386}_{003}$ &     $1.53^{3088}_{2702}$ &      $1.57^{2172}_{1786}$ &      $1.612^{585}_{196} $ &      $1.65^{4261}_{3871}$\\[1ex]
   \ra{0.8}
   & \tiii$1.38\cc{6295}     $ & \tiii$1.42\cc{9994}    $ &\tiii$1.47\cc{5600}     $ & \tiii$1.52\cc{2898}     $ &\tiiii$1.5\cc{71636}     $ &\tiiii$1.6\cc{21537}     $\\
   &      $1.385^{601}_{219} $ &      $1.428^{695}_{310}$ &     $1.473^{686}_{299} $ &      $1.520^{493}_{104} $ &      $1.56^{9022}_{8628}$ &      $1.61^{9158}_{8766}$\\[1ex]
   \ra{0.9}
   & \tiii$1.31\cc{4077}     $ & \tiii$1.36\cc{5418}    $ &\tiii$1.41\cc{9230}     $ & \tiii$1.47\cc{5166}     $ &\tiiii$1.5\cc{32828}     $ &\tiiii$1.5\cc{91781}     $\\
   &      $1.31^{3256}_{2870}$ &      $1.363^{678}_{289}$ &     $1.416^{530}_{140} $ &      $1.471^{685}_{292} $ &      $1.528^{987}_{591} $ &      $1.58^{8261}_{7862}$\\[1ex]
   \ra{1.0}
   & \tii$1.242\cc{955}     $ & \tiii$1.30\cc{2592}    $ &\tiii$1.36\cc{5353}     $ &\tiiii$1.4\cc{30702}     $ & \tiii$1.49\cc{8032}     $ & \tiii$1.56\cc{6699}     $\\
   &      $1.24^{2165}_{1776}$ &      $1.300^{452}_{060}$ &     $1.361^{773}_{379} $ &      $1.425^{926}_{528} $ &      $1.492^{675}_{274} $ &      $1.561^{754}_{352} $
\end{tabular}
\bigskip
\bigskip

\caption{\label{table:chakra3by2m}
Computation of $|\lambda_{-1}(-3/2,am,a\omega)|$ for different $a\omega$ and $\omega/m$, as shown.
The quantities in the upper part of each row are the positive square root of those in \protect\cite[Table 2b]{chakrabarti}.
The quantities in the lower part of each row are the enclosures determined directly from an application of \eqref{guayaba}.
Quantities on the upper rows which are not within our guaranteed error bounds are shaded.
}
\end{table}

\pagebreak

\begin{table}[H]

\renewcommand{\arraystretch}{1.5}

\setlength{\tabcolsep}{9pt}
\hspace*{-1.5cm}
\begin{tabular}{r|ll|ll|ll|ll}
   \hline\hline
   \multicolumn{8}{c}{$am = 0.005,\ a\omega = 0.015$}\\
   \hline\hline
   \multicolumn{8}{c}{}\\
   & \multicolumn{2}{c|}{$n=-1$}& \multicolumn{2}{c|}{$n=1$}& \multicolumn{2}{c|}{$n=2$}& \multicolumn{2}{c}{$n=3$}\\[2ex]
   & \multicolumn{1}{c}{\footnotesize A} & \multicolumn{1}{c|}{\footnotesize N}
   & \multicolumn{1}{c}{\footnotesize A} & \multicolumn{1}{c|}{\footnotesize N}
   & \multicolumn{1}{c}{\footnotesize A} & \multicolumn{1}{c|}{\footnotesize N}
   & \multicolumn{1}{c}{\footnotesize A} & \multicolumn{1}{c}{\footnotesize N}
   \\[1ex]
   \hline
   \hspace*{-.3cm}
   $\kappa= -4.5$ & $-4.9_{9299}^{7997}$    &$-4.98_{817}^{547}$        & $4.9_{7997}^{9299}$    &  $4.98_{456}^{727}$        & $5.9_{8248}^{9500}$    & $5.9_{8497}^{9176}   $   & $6.9_{8427}^{9643}$    & $6.9_{8395}^{9622}$     \\
   $        -3.5$ & $-3.9_{9374}^{7997}$    &$-3.98_{833}^{611}$        & $3.9_{7997}^{9374}$    &  $3.98_{500}^{723}$        & $4.9_{8298}^{9600}$    & $4.9_{8620}^{9190}   $   & $5.9_{8499}^{9750}$    & $5.9_{8570}^{9621}$     \\
   $        -2.5$ & $-2.9_{9499}^{7996}$    &$-2.98_{874}^{698}$        & $2.9_{7996}^{9499}$    &  $2.98_{555}^{731}$        & $3.9_{8373}^{9750}$    & $3.9_{8776}^{9242}   $   & $4.9_{8599}^{9900}$    & $4.9_{8778}^{9659}$     \\
   $        -1.5$ & $-1.9_{9749}^{7994}$    &$-1.98_{990}^{812}$        & $1.9_{7994}^{9749}$    &  $1.98_{612}^{789}$        & $_{2.98498}^{3.00000}$ & $2.9_{8971}^{9404}   $   & $_{3.98749}^{4.00125}$ & $3.9_{9005}^{9808}$     \\
   $        -0.5$ & $-_{1.00500}^{0.97988}$ &$-_{1.17342}^{0.81076}$    & $_{0.97988}^{1.00500}$ &  $_{0.80779}^{1.16967}$    & $_{1.98748}^{2.00500}$ & $_{1.74245}^{2.25132}$   & $_{2.98999}^{3.00500}$ & $_{2.68941}^{3.30912}$  \\
   $         0.5$ & $-_{1.01990}^{0.99500}$ &$-_{1.18847}^{0.82905}$    & $_{0.99500}^{1.01990}$ &  $_{0.83197}^{1.19218}$    & $_{1.99500}^{2.01249}$ & $_{1.75063}^{2.26051}$   & $_{2.99500}^{3.01000}$ & $_{2.69440}^{3.31501}$  \\
   $         1.5$ & $-2.0_{1995}^{0248}$    &$-2.01_{189}^{013}$        & $2.0_{0248}^{1995}$    &  $2.01_{212}^{389}$        & $_{2.99999}^{3.01499}$ & $3.0_{0600}^{1032}   $   & $_{3.99874}^{4.01250}$ & $4.00_{196}^{998}$      \\
   $         2.5$ & $-3.0_{1997}^{0498}$    &$-3.01_{303}^{127}$        & $3.0_{0498}^{1997}$    &  $3.01_{270}^{446}$        & $4.0_{0249}^{1624}$    & $4.0_{0760}^{1226}   $   & $5.0_{0099}^{1400}$    & $5.0_{0343}^{1225}$     \\
   $         3.5$ & $-4.0_{1998}^{0623}$    &$-4.01_{389}^{167}$        & $4.0_{0623}^{1998}$    &  $4.01_{278}^{500}$        & $5.0_{0399}^{1699}$    & $5.0_{0812}^{1382}   $   & $6.0_{0249}^{1500}$    & $6.0_{0381}^{1432}$     \\
   $         4.5$ & $-5.0_{1998}^{0698}$    &$-5.01_{454}^{183}$        & $5.0_{0698}^{1998}$    &  $5.01_{274}^{545}$        & $6.0_{0499}^{1749}$    & $6.0_{0825}^{1504}   $   & $7.0_{0356}^{1571}$    & $7.0_{0379}^{1607}$    
\end{tabular}

\bigskip
\bigskip
\bigskip
\bigskip

\setlength{\tabcolsep}{9pt}
\hspace*{-1.5cm}
\begin{tabular}{r|ll|ll|ll|ll}
   \hline\hline
   \multicolumn{8}{c}{$am = 0.25,\ a\omega = 0.75$}\\
   \hline\hline
   \multicolumn{8}{c}{}\\
   & \multicolumn{2}{c|}{$n=-1$}& \multicolumn{2}{c|}{$n=1$}& \multicolumn{2}{c|}{$n=2$}& \multicolumn{2}{c}{$n=3$}\\[2ex]
   & \multicolumn{1}{c}{\footnotesize A} & \multicolumn{1}{c|}{\footnotesize N}
   & \multicolumn{1}{c}{\footnotesize A} & \multicolumn{1}{c|}{\footnotesize N}
   & \multicolumn{1}{c}{\footnotesize A} & \multicolumn{1}{c|}{\footnotesize N}
   & \multicolumn{1}{c}{\footnotesize A} & \multicolumn{1}{c}{\footnotesize N}
   \\[1ex]
   \hline
   \hspace*{-.3cm}
   $\kappa= -4.5$  & $-_{4.61607}^{3.93330}$ & $-4.3_{5071}^{4800}$    & $_{3.93330}^{4.61607}$ & $4.29_{622}^{891}$  &    $5._{08853}^{73293}$   & $5.4_{2561}^{3238}$      & $6._{19204}^{81221}$   & $6.5_{1143}^{2368}$     \\
   $        -3.5$  & $-_{3.65037}^{2.91227}$ & $-3.37_{482}^{259}$     & $_{2.91227}^{3.65037}$ & $3.30_{759}^{981}$  &    $4._{10889}^{78460}$   & $4.4_{6547}^{7115}$      & $5._{22722}^{86806}$   & $5.5_{6039}^{7087}$     \\
   $        -2.5$  & $-_{2.71222}^{1.87132}$ & $-2.41_{438}^{258}$     & $_{1.87132}^{2.71222}$ & $2.32_{570}^{746}$  &    $3._{14116}^{86421}$   & $3.5_{2756}^{3220}$      & $4._{27769}^{94708}$   & $4.63_{012}^{890}$     \\
   $        -1.5$  & $-_{1.85079}^{0.75000}$ & $-1.4_{9000}^{8796}$    & $_{0.75000}^{1.85079}$ & $1.3_{5885}^{6075}$ &    $_{2.19948}^{3.00000}$ & $2.6_{3770}^{4203}$      & $_{3.35555}^{4.06609}$ & $3.7_{3691}^{4491}$     \\
   $        -0.5$  & $-_{1.28078}^{0.25000}$ & $-0._{90569}^{44078}$   & $_{0.25000}^{1.28078}$ & $0._{22804}^{65248}$&    $_{1.33113}^{2.26557}$ & $_{1.65046}^{2.12229}$   & $_{2.48861}^{3.26040}$ & $_{2.62846}^{3.22283}$  \\
   $         0.5$  & $-_{1.85079}^{0.75000}$ & $-1._{62848}^{32502}$   & $_{0.75000}^{1.85079}$ & $1._{42913}^{76746}$&    $_{1.75000}^{2.60850}$ & $2._{02453}^{55401}$     & $_{2.75000}^{3.50000}$ & $_{2.87169}^{3.51065}$  \\
   $         1.5$  & $-2._{90754}^{09520}$   & $-2.57_{743}^{578}$     & $2._{09520}^{90754}$   & $2.65_{566}^{737}$  &    $_{2.99037}^{3.72312}$ & $3.4_{3822}^{4251}$      & $_{3.93330}^{4.61607}$ & $4.3_{2596}^{3398}$     \\
   $         2.5$  & $-3._{93274}^{21410}$   & $-3.62_{304}^{132}$     & $3._{21410}^{93274}$   & $3.68_{142}^{315}$  &    $4._{10889}^{784591}$  & $4.51_{313}^{774}$       & $5._{04150}^{68715}$   & $5.4_{0876}^{1754}$     \\
   $         3.5$  & $-4._{94708}^{27769}$   & $-4.64_{965}^{746}$     & $4._{27769}^{94708}$   & $4.69_{575}^{794}$  &    $5._{18139}^{82338}$   & $5.5_{5791}^{6358}$      & $6._{11396}^{73557}$   & $6.4_{6318}^{7365}$     \\
   $         4.5$  & $-5._{95636}^{31776}$   & $-5.66_{717}^{448}$     & $5._{31776}^{95636}$   & $5.70_{488}^{757}$  &    $6._{23074}^{85020}$   & $6.5_{8784}^{9460}$      & $7._{16619}^{77081}$   & $7.5_{0180}^{1404}$     \\
\end{tabular}
\bigskip
\bigskip
\caption{
\label{tab:SFC}
Analytic (A) and numeric (N) bounds for the eigenvalues of $\mAk$. The range of $\kappa$, $n$, $am$ and $a\omega$ employed corresponds to analogous calculations in \protect\cite[Table~II]{SFC}. Here the numerical bounds were determined directly from \eqref{guayaba} and their computation does not use any input from the analytical bounds.
}
\end{table}

\pagebreak

\begin{table}[H]

\renewcommand{\arraystretch}{1.5}

\begin{tabular}{r|lll|lll}
   \hline\hline
   \multicolumn{7}{c}{$am = 0.005,\ a\omega = 0.015$}\\
   \hline\hline
   \multicolumn{7}{c}{}\\
   & \multicolumn{3}{c|}{$n=1$}& \multicolumn{3}{c}{$n=2$}\\[2ex]
   & \parbox{1.2cm}{\footnotesize  from A}
   & \parbox{1.2cm}{\footnotesize  from N} & \cite{SFC}
   & \parbox{1.2cm}{\footnotesize  from A}
   & \parbox{1.2cm}{\footnotesize  from N} & \cite{SFC}
   \\[1ex]
   \hline
   $\kappa= -4.5$ & $4.9859_{0}^{2}$     & $4.9859_{0}^{2}$       & \ttl$4.98581$  & $5.9883_{4}^{8}$        & $5.9883_{4}^{8}$       &           \\
   $        -3.5$ & $3.9861_{1}^{2}$     & $3.9861_{1}^{2}$       & $3.98611$      & $4.9890_{3}^{6}$        & $4.9890_{3}^{6}$       & $4.98905$ \\
   $        -2.5$ & $2.9864_{2}^{4}$     & $2.9864_{2}^{4}$       & $2.98643$      & $3.990_{08}^{10}$       & $3.990_{08}^{10}$      & $3.99009$ \\
   $        -1.5$ & $1.9870_{0}^{1}$     & $1.9870_{0}^{1}$       & $1.98700$      & $2.9918_{6}^{8}$        & $2.9918_{6}^{8}$       & $2.99187$ \\
   $        -0.5$ & $_{0.95595}^{1.00537}$ & $_{0.94529}^{1.00693}$ & $0.98843$      & $_{1.93169}^{2.06215}$  & $_{1.90340}^{2.07515}$ & $1.99567$ \\
   $         0.5$ & $_{0.97907}^{1.02824}$ & $_{0.96816}^{1.02970}$ & $1.01167$      & $_{1.93988}^{2.07151}$  & $_{1.91121}^{2.08548}$ & $2.00435$ \\
   $         1.5$ & $2.0130_{0}^{1}$     & $2.0130_{0}^{1}$       & $2.01300$      & $3.0081_{5}^{7}$        & $3.0081_{5}^{7}$       & $3.00815$ \\
   $         2.5$ & $3.0135_{7}^{8}$     & $3.0135_{7}^{8}$       & $3.01357$      & $4.0099_{2}^{4}$        & $4.0099_{2}^{4}$       & $4.00993$ \\
   $         3.5$ & $4.013_{88}^{90}$     & $4.013_{88}^{90}$      & $4.01389$      & $5.0109_{5}^{8}$        & $5.0109_{5}^{8}$       & $5.01096$ \\
   $         4.5$ & $5.014_{08}^{10}$     & $5.014_{08}^{10}$      & $5.01409$      & $6.0116_{3}^{6}$        & $6.0116_{3}^{6}$       &           
\end{tabular}

\bigskip
\bigskip
\bigskip
\bigskip

\begin{tabular}{r|lll|lll}
   \hline\hline
   \multicolumn{7}{c}{$am = 0.25,\ a\omega = 0.75$}\\
   \hline\hline
   \multicolumn{7}{c}{}\\
   & \multicolumn{3}{c|}{$n=1$}& \multicolumn{3}{c}{$n=2$}\\[2ex]
   & \parbox{1.2cm}{\footnotesize  from A}
   & \parbox{1.2cm}{\footnotesize  from N} & \cite{SFC}
   & \parbox{1.2cm}{\footnotesize  from A}
   & \parbox{1.2cm}{\footnotesize  from N} & \cite{SFC}
   \\[1ex]
   \hline
   $\kappa= -4.5$ &$4.2975_{5}^{7}$     &$4.2975_{6}^{7}$      & $4.29756$ &     $5.42_{898}^{902}$    &$5.42_{898}^{901}$    &          \\
   $        -3.5$ &$3.308_{69}^{70}$    &$3.308_{69}^{70}$     & $3.30870$ &     $4.468_{29}^{32}$     &$4.4683_{0}^{2}$      & \ttl$4.46676$\\
   $        -2.5$ &$2.3265_{7}^{8}$     &$2.3265_{7}^{8}$      & $2.32657$ &     $3.5298_{6}^{9}$      &$3.5298_{7}^{9}$      & \ttl$3.52651$\\
   $        -1.5$ &$1.359_{79}^{80}$    &$1.359_{79}^{80}$     & \tii$1.35984$ & $2.6398_{5}^{7}$      &$2.6398_{5}^{7}$      & \ttl $2.63036$\\
   $        -0.5$ &$0._{38970}^{54256}$ &$0.4_{0304}^{9138}$  & $0.44058$ &     $1.79_{396}^{828}$    &$1._{81137}^{93148}$  & $1.84225$\\
   $         0.5$ &$1._{40966}^{61048}$ &$1._{53115}^{60808}$  & $1.59764$ &     $2._{13715}^{44911}$  &$2._{16893}^{42358}$  & $2.22587$\\
   $         1.5$ &$2.6565_{0}^{1}$     &$2.6565_{0}^{1}$      & \tii$2.65654$ & $3.4403_{5}^{8}$      &$3.4403_{5}^{7}$      & \ttl$3.43391$\\
   $         2.5$ &$3.6822_{8}^{9}$     &$3.6822_{8}^{9}$      & $3.68229$ &     $4.5154_{2}^{5}$      &$4.5154_{2}^{4}$      & \ttl$4.51300$\\
   $         3.5$ &$4.6968_{4}^{5}$     &$4.6968_{4}^{5}$      & $4.69685$ &     $5.5607_{2}^{6}$      &$5.5607_{3}^{5}$      & \ttl$5.55956$\\
   $         4.5$ &$5.7062_{1}^{3}$     &$5.7062_{1}^{3}$      & $5.70622$ &     $6.591_{19}^{24}$     &$6.5912_{0}^{3}$      &          \\
\end{tabular}

\bigskip
\bigskip

\caption{
\label{tab:SFC2}
Improved numerical enclosures for the eigenvalues originally reported in \protect\cite[Table~II]{SFC}.
These improved bounds were found from the data in Table~\ref{tab:SFC}, analytical or numerical as appropriate, and by means of an implementation of  \eqref{lulo}. Values from \protect\cite{SFC} that are over or under-shot are highlighted in colour. }
\end{table}

\pagebreak

\begin{figure}[H]
\includegraphics[width=10cm]{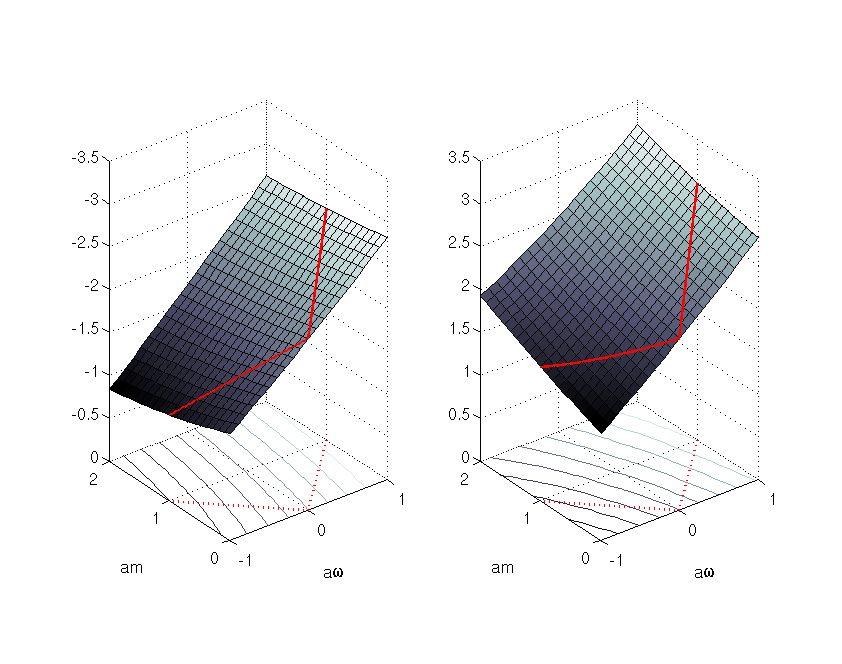}
\caption{Approximation of $\lambda_{\pm1}(\frac32,am,a\omega)$ for 100 different $(a\omega,am)$ equally distributed in the rectangle $[-1,1]\times [0,2]$. 
The red curve corresponds to the exact value of $\lambda_{\pm 1}$ from \eqref{eq:lambdatau}.}\label{clementina}
\end{figure}

\begin{figure}[H]
\includegraphics[width=10cm]{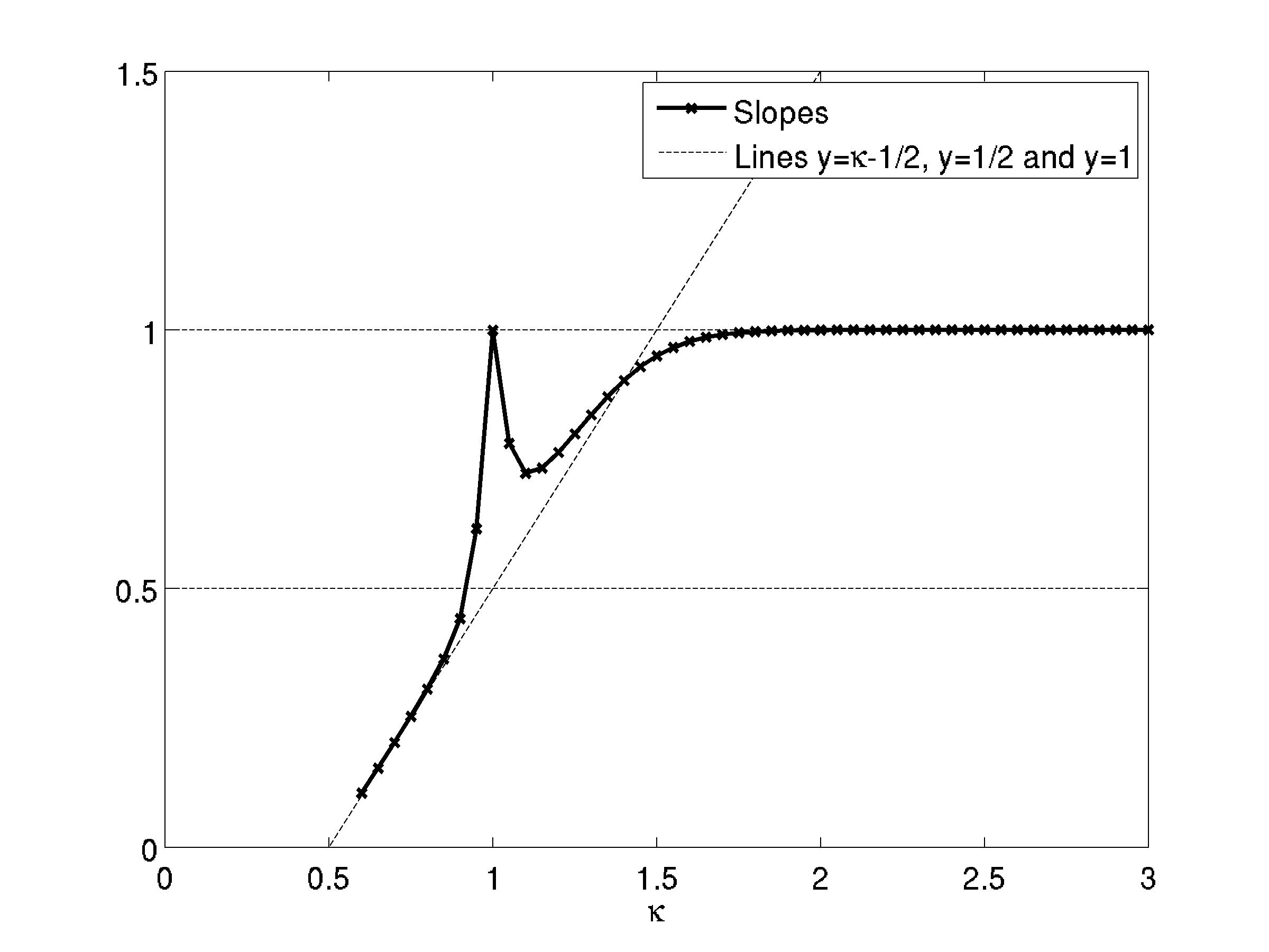}
\caption{A numerical approximation of the optimal exponent in Theorem~\ref{mamoncillo}. } \label{lima}
\end{figure}

\end{document}